\begin{document}

\title{Eigenvalues and Singular Values of Dual Quaternion Matrices}
\author{ Liqun Qi\footnote{%
    Department of Applied Mathematics, The Hong Kong Polytechnic University, Hung Hom,
    Kowloon, Hong Kong
    %Department of Mathematics, School of Science, Hangzhou Dianzi University, Hangzhou 310018 China
    ({\tt maqilq@polyu.edu.hk}).}
    %\and  \
    %Chen Ling\thanks{Department of Mathematics, Hangzhou Dianzi University, Hangzhou, 310018, China; ({\tt
%macling@hdu.edu.cn}).  This author's work was supported by Natural Science Foundation of China (No. 11971138) and Natural Science Foundation of Zhejiang Province (No. LY19A010019, LD19A010002).}
     \and and \
    Ziyan Luo\footnote{Corresponding author, Department of Mathematics,
  Beijing Jiaotong University, Beijing 100044, China. ({\tt zyluo@bjtu.edu.cn}). This author's work was supported by Beijing Natural Science Foundation (Grant No. Z190002).}
}
\date{\today}
\maketitle

\begin{abstract}
The poses of $m$ robotics in $n$ time points may be represented by an $m \times n$ dual quaternion matrix.    In this paper, we study the spectral theory of dual quaternion matrices.  We introduce right and left eigenvalues for square dual quaternion matrices.  If a right eigenvalue is a dual number, then it is also a left eigenvalue.  In this case, this dual number is called an eigenvalue of that dual quaternion matrix.  We show that the right eigenvalues of a dual quaternion Hermitian matrix are dual numbers.  Thus, they are eigenvalues.   An $n \times n$ dual quaternion Hermitian matrix is shown to have exactly $n$ eigenvalues.   It is positive semidefinite, or positive definite, if and only if all of its eigenvalues  are nonnegative, or positive and appreciable, dual numbers, respectively.   We present a unitary decomposition of a dual quaternion Hermitian matrix, and the singular value decomposition for a general dual quaternion matrix. The singular values of a dual quaternion matrix are nonnegative dual numbers.

\medskip

 % \medskip

  \textbf{Key words.} Dual numbers, dual quaternion matrices, Hermitian matrices, eigenvalues, singular values.

  %\medskip
 % \textbf{AMS subject classifications. }
\end{abstract}

\renewcommand{\Re}{\mathds{R}}
\newcommand{\rank}{\mathrm{rank}}
\renewcommand{\span}{\mathrm{span}}
\newcommand{\X}{\mathcal{X}}
\newcommand{\A}{\mathcal{A}}
\newcommand{\I}{\mathcal{I}}
\newcommand{\B}{\mathcal{B}}
\newcommand{\C}{\mathcal{C}}
\newcommand{\OO}{\mathcal{O}}
\newcommand{\e}{\mathbf{e}}
\newcommand{\0}{\mathbf{0}}
\newcommand{\dd}{\mathbf{d}}
\newcommand{\ii}{\mathbf{i}}
\newcommand{\jj}{\mathbf{j}}
\newcommand{\kk}{\mathbf{k}}
\newcommand{\va}{\mathbf{a}}
\newcommand{\vb}{\mathbf{b}}
\newcommand{\vc}{\mathbf{c}}
\newcommand{\vg}{\mathbf{g}}
\newcommand{\vr}{\mathbf{r}}
\newcommand{\vt}{\rm{vec}}
\newcommand{\vx}{\mathbf{x}}
\newcommand{\vy}{\mathbf{y}}
\newcommand{\vu}{\mathbf{u}}
\newcommand{\vv}{\mathbf{v}}
\newcommand{\y}{\mathbf{y}}
\newcommand{\vz}{\mathbf{z}}
\newcommand{\T}{\top}

\newtheorem{Thm}{Theorem}[section]
\newtheorem{Def}[Thm]{Definition}
\newtheorem{Ass}[Thm]{Assumption}
\newtheorem{Lem}[Thm]{Lemma}
\newtheorem{Prop}[Thm]{Proposition}
\newtheorem{Cor}[Thm]{Corollary}
\newtheorem{example}[Thm]{Example}
\newtheorem{remark}[Thm]{Remark}

\section{Introduction}

%Quaternions were introduced by Hamilton in 1843 \cite{Ha43}.
Dual quaternions were introduced by
Clifford \cite{Cl73} in 1873.   They now have found wide applications in robotics and computer graphics \cite{BK20, BLH19, CKJC16, Da99, HY03, WYL12}.

Suppose that we have $m$ robotics.   The pose of each robotics $j$ is represented by a unit dual quaternion $a_{ij}$ \cite{BK20} at time $i$.   Thus, the poses of these $m$ robotics in time points $1, \cdots, n$ are represented by a dual quaternion matrix $A$.   If some entries of $A$ cannot be observed, how can we recover these entries with an approximate $m \times n$ dual quaternion matrix $B$ of a low ``rank''?   Unlike real matrices, or complex matrices, or even quaternion matrices \cite{Ro14, WLZZ18, Zh97}, which have matured matrix theories, there is a blank on the dual quaternion matrix research.  In this paper, we try to explore on the spectral theory of dual quaternion matrices.

In the next section, we present some preliminary knowledge on dual numbers, quaternions and dual quaternions.   In particular, a total order for dual numbers introduced in \cite{QL21a}, is recalled.

In Section 3, we introduce right and left eigenvalues for a square dual quaternion matrix.  If a right eigenvalue is a dual number, then it is also a left eigenvalue.   In that case, it is simply called an eigenvalue.  We show that the standard part $\lambda_{st}$ of a right eigenvalue $\lambda$ of a square dual quaternion matrix $A$ must be a right eigenvalue of the standard part $A_{st}$ of $A$.  Then we show that a right eigenvalue $\lambda$ of a dual quaternion Hermitian matrix $A$ must be a dual number, and present formulas of $\lambda$, its standard part $\lambda_{st}$ and infinitesimal part $\lambda_\I$.    Thus, a right eigenvalue of a dual quaternion Hermitian matrix $A$ is an eigenvalue of $A$.  We further show that if $\lambda_{st}$ is a simple right eigenvalue of $A_{st}$, then with formula of $\lambda_\I$, $\lambda = \lambda_{st} + \lambda_\I \epsilon$ is an eigenvalue of $A$.  Then we show that two eigenvectors of a dual quaternion Hermitian matrix, associated with two eigenvalues with distinct standard parts, are orthogonal to each other.

We present a unitary decomposition of a dual quaternion Hermitian matrix in Section 4.  A dual quaternion Hermitian matrix can always be diagonalized by a unitary decomposition.  Thus, an $n \times n$ dual quaternion Hermitian matrix has exactly $n$ eigenvalues. It is positive semidefinite, or positive definite, if and only if all of its eigenvalues are nonnegative, or positive and appreciable, dual numbers, respectively.

To establish the singular value decomposition for a general $m \times n$ dual quaternion matrix $A$, we need to decompose the $n \times n$ positive semidefinite dual quaternion Hermitian matrix $M = A^*A$ to the form of $U\Sigma^2U^*$, i.e., $M = L^2$ for $L = U\Sigma U^*$, where $U$ is a unitary $n \times n$ dual quaternion matrix, $\Sigma$ is an $n \times n$ block diagonal dual quaternion matrix.   This is always possible for a real, complex or quaternion positive semidefinite Hermitian matrix $M$.  But it is not always possible for a positive semidefinite dual quaternion Hermitian matrix $M$.   However, it is still possible for $M = A^*A$ for a general $m \times n$ dual quaternion matrix $A$.   Hence, we call such a positive semidefinite dual quaternion Hermitian matrix $M$ a perfect Hermitian matrix, and show that $M = A^*A$ is a  perfect Hermitian matrix in Section 5.

In Section 6, we present the singular value decomposition for a general dual quaternion matrix.  The singular values of a dual quaternion matrix are nonnegative dual numbers. The rank and the appreciable rank of a general dual quaternion matrix are introduced in terms of the number of positive singular values and the number of those with positive standard parts.

Some final remarks are made in Section 7.

We denote scalars, vectors and matrices by small letters, bold small letters and capital letters, respectively.

\section{Preliminaries}

\subsection{Dual Numbers}

Denote $\mathbb R$ and $\mathbb D$ as the set of the real numbers, and the set of the dual numbers, respectively.   A dual number $q$ has the form $q = q_{st} + q_\I\epsilon$, where $q_{st}$ and $q_\I$ are real numbers,  and $\epsilon$ is the infinitesimal unit, satisfying $\epsilon^2 = 0$.   We call $q_{st}$ the real part or the standard part of $q$, and $q_\I$ the dual part or the infinitesimal part of $q$.  The infinitesimal unit $\epsilon$ is commutative in multiplication with real numbers, complex numbers and quaternion numbers.  The dual numbers form a commutative algebra of dimension two over the reals.    If $q_{st} = 0$, we say that $q$ is appreciable, otherwise, we say that $q$ is infinitesimal.

In \cite{QL21a}, a total order was introduced for dual numbers.  Given two dual numbers $p, q \in \mathbb D$, $p = p_{st} + p_\I\epsilon$, $q = q_{st} + q_\I\epsilon$, where $p_{st}$, $p_\I$, $q_{st}$ and $q_\I$ are real numbers, we say that $p \le q$, if either $p_{st} < q_{st}$, or $p_{st} = q_{st}$ and $p_\I \le q_\I$.  In particular, we say that $p$ is positive, nonnegative, nonpositive or negative, respectively, if $p > 0$, $p \ge 0$, $p \le 0$ or $p < 0$, respectively.

%In \cite{QL21a}, it was also defined that
%the absolute value of $q \in \mathbb D$ is
%$$|q| = \left\{ \begin{aligned} |q_{st}| + {\rm sgn}(q_{st})q_\I\epsilon, & \ {\rm if}\  q_{st} \not = 0, \\ |q_\I|\epsilon, &   \ {\rm otherwise},  \end{aligned}  \right.$$
%where for any $u \in \mathbb R$,
%$${\rm sgn}(u) =  \left\{ \begin{aligned} 1, & \ {\rm if}\  u > 0, \\ 0, &   \ {\rm if}\  u = 0, \\
%-1, &   \ {\rm if}\  u < 0.  \end{aligned}  \right. $$

\subsection{Quaternions}

Denote $\mathbb Q$ as the set of the quaternions.
A quaternion $q$ has the form
$q = q_0 + q_1\ii + q_2\jj + q_3\kk,$
where $q_0, q_1, q_2$ and $q_3$ are real numbers, $\ii, \jj$ and $\kk$ are three imaginary units of quaternions, satisfying
$\ii^2 = \jj^2 = \kk^2 =\ii\jj\kk = -1,$
$\ii\jj = -\jj\ii = \kk, \ \jj\kk = - \kk\jj = \ii, \kk\ii = -\ii\kk = \jj.$
The real part of $q$ is Re$(q) = q_0$.   The imaginary part of $q$ is Im$(q) = q_1\ii + q_2\jj +q_3\kk$.
A quaternion is called imaginary if its real part is zero.
The multiplication of quaternions satisfies the distribution law, but is noncommutative.

The conjugate of $q = q_0 + q_1\ii + q_2\jj + q_3\kk$ is
$\bar q \equiv q^* = q_0 - q_1\ii - q_2\jj - q_3\kk.$
The magnitude of $q$ is
$|q| = \sqrt{q_0^2+q_1^2+q_2^2+q_3^2}.$
It follows that the inverse of a nonzero quaternion $q$ is given by
$q^{-1} = {q^* / |q|^2}.$ For any two quaternions $p$ and $q$, we have $(pq)^* = q^*p^*$.

Two quaternions $p$ and $q$ are said to be similar if there is a nonzero quaternion $u$ such that $p = u^{-1}qu$.   We denote $p \sim q$.    It is easy to check that $\sim$ is an equivalence relation on the quaternions.   Denote by $[q]$ the equivalence class containing $q$. Then $[q]$ is a singleton if and only if $q$ is a real number.

Denote the collection of $n$-dimensional quaternion  vectors by ${\mathbb {Q}}^n$. For $\vx = (x_1, x_2,\cdots, x_n)^\top, \vy = (y_1, y_2,\cdots, y_n)^\top  \in {\mathbb {Q}}^n$, define $\vx^*\vy = \sum_{i=1}^n x_i^*y_i$, where $\vx^* = (x_1^*, x_2^*,\cdots, x_n^*)$ is the conjugate transpose of $\vx$.  Also, denote $\bar \vx = (x_1^*, x_2^*,\cdots, x_n^*)^\top$, the conjugate of $\vx$.

\subsection{Quaternion Matrices}

Denote the collections of complex and quaternion $m \times n$ matrices by ${\mathbb C}^{m \times n}$ and ${\mathbb Q}^{m \times n}$, respectively.
Denote a quaternion matrix $A= (a_{ij}) \in {\mathbb Q}^{m \times n}$ as
%\begin{equation} \label{ee2}
$A = A_0 + A_1\ii + A_2\jj + A_3\kk,$
%\end{equation}
where $A_0, A_1, A_2, A_3 \in {\mathbb R}^{m \times n}$.   The transpose of $A$ is denoted as $A^\top = (a_{ji})$. The conjugate of $A$ is denoted as $\bar A = (a_{ij}^*)$.   The conjugate transpose of $A$ is denoted as $A^* = (a_{ji}^*) = \bar A^\top$.

%For $A, B \in {\mathbb DC}^{m \times n}$, their inner product is defined as
%$$\langle A, B \rangle = {\rm Tr}(A^*B),$$
%where ${\rm Tr}(A^*B)$ denotes the trace of $A^*B$.
%The Frobenius norm of $A$ is
%$$\|A\|_F  = \sqrt{\sum_{i=1}^m \sum_{j=1}^n |a_{ij}|^2}.$$
%$$\|A\|_F = \sqrt{\langle A, A \rangle} = \sqrt{{\rm Tr}(A^*A)} = \sqrt{\sum_{i=1}^m \sum_{j=1}^n |a_{ij}|^2}.$$
%the $\ell_1$-norm of $A = (a_{ij}) \in {\mathbb Q}^{m \times n}$ is defined by $\|A\|_1 = \sum_{i=1}^m \sum_{j=1}^n |a_{ij}|$, and the $\ell_\infty$-norm of $A$ is defined by $\|A\|_\infty = \max_{i, j} |a_{ij}|$ \cite{JNS19}.

Let $A \in {\mathbb Q}^{m \times n}$ and $B \in {\mathbb Q}^{n \times r}$.   Then we have $(AB)^* = B^*A^*$.   In general, $(AB)^\top \not = B^\top A^\top$ and $\overline {AB} \not = \bar A \bar B$.

A square quaternion matrix $A \in {\mathbb Q}^{n \times n}$ is called normal if $A^*A = AA^*$, Hermitian if $A^* = A$; unitary if $A^*A = I$; skew-Hermitian if $A^* = -A$; and invertible (nonsingular) if there exists $AB = BA = I$ for some $B \in {\mathbb Q}^{n \times n}$.  In that case, we denote $A^{-1} = B$.

We have $(AB)^{-1} = B^{-1}A^{-1}$ if $A$ and $B$ are invertible, and $\left(A^*\right)^{-1} = \left(A^{-1}\right)^*$ if $A$ is invertible.

A quaternion Hermitian matrix $A$ is called positive semidefinite if for any $\vx \in {\mathbb Q}^n$, $\vx^*A\vx \ge 0$; $A$ is called positive definite if for any $\vx \in {\mathbb Q}^n$ with $\vx \not = \0$,  we have $\vx^*A\vx > 0$.  A square quaternion matrix $A \in {\mathbb Q}^{n \times n}$ is unitary if and only if its column (row) vectors form an orthonormal basis of ${\mathbb Q}^n$.

Suppose that $A \in {\mathbb Q}^{n \times n}$, $\vx \in {\mathbb Q}^n$, $\vx \not = \0$, and $\lambda \in
{\mathbb Q}$, satisfy
%\begin{equation} \label{ee3}
$A\vx = \vx\lambda$.
%\end{equation}
Then $\lambda$ is called a right eigenvalue of $A$, with $\vx$ as an associated right eigenvector.  If $\lambda$ is complex and its imaginary part is nonnegative, then $\lambda$ is called a standard right eigenvalue of $A$. On the other hand, if there are $\vx \in {\mathbb Q}^n$, $\vx \not = \0$, and $\lambda \in
{\mathbb Q}$, satisfying
$A\vx = \lambda\vx$, then $\lambda$ is called a left eigenvalue of $A$, with $\vx$ as an associated left eigenvector.  If $\lambda$ is real and a right eigenvalue of $A$, then it is also a left eigenvalue of $A$, as a real number is commutative with a quaternion vector.

  The following theorem collects some known  results of the right eigenvalues of quaternion matrices \cite{Ro14, WLZZ18, Zh97}.

\begin{Thm} \label{t2.1}
Suppose that $A \in {\mathbb Q}^{n \times n}$.   Then we have the following properties of the right eigenvalues of $A$.

1. If $\lambda \in {\mathbb Q}$ is a right eigenvalue of $A$, then $\mu \in [\lambda]$ is also a right eigenvalue of $A$.

2.  If $\lambda \in {\mathbb C}$ is a right eigenvalue of $A$, then $\bar \lambda$ is also a right eigenvalue of $A$ with the same right eigenvectors.

3. $A$ has exactly $n$ standard right eigenvalues.

4. $A$ is normal if and only if there is a unitary matrix $U \in {\mathbb Q}^{n \times n}$ such that
\begin{equation} \label{e2}
U^*AU = D = {\rm diag}(\lambda_1, \lambda_2,\cdots, \lambda_n),
\end{equation}
where $\lambda_1, \lambda_2,\cdots, \lambda_n$ are right eigenvalues of $A$.

5. $A$ is Hermitian if and only if $A$ is normal, and all the right eigenvalues of $A$ are real.

6. If $A$ is Hermitian, then $A$ has exactly $n$ real right eigenvalues, and there is a unitary matrix $U \in {\mathbb Q}^{n \times n}$ such that (\ref{e2}) holds with $\lambda_1 \ge \lambda_2\ge \cdots \ge \lambda_n$ as the real right eigenvalues of $A$; $A$ is positive semidefinite if and only if $\lambda_n \ge 0$; $A$ is positive definite if and only if $\lambda_n > 0$.

%{\color {red} 7. $A$ is skew-Hermitian if and only if $A$ is normal, and all the right eigenvalues of $A$ are imaginary.}

%7. $A$ is unitary if and only if $A$ is normal, and all the right eigenvalues of $A$ are in $S^3$.
\end{Thm}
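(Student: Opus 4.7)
The plan is to verify the six items of Theorem~\ref{t2.1} in order. Since they collect results already available in \cite{Ro14, WLZZ18, Zh97}, the task is mainly to outline the key identities, using three standard tools: similarity orbits of quaternions, the $2n\times 2n$ complex adjoint representation $\chi_A$, and an inductive unitary reduction in the spirit of Schur triangularisation.

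Items 1 and 2 are direct similarity calculations. For item 1, write $\mu = u^{-1}\lambda u$ for a nonzero quaternion $u$; if $A\vx = \vx\lambda$, then $A(\vx u) = \vx\lambda u = (\vx u)(u^{-1}\lambda u) = (\vx u)\mu$, so $\vx u$ is a right eigenvector for $\mu$. Item 2 specialises this to $u = \jj$, which satisfies $\jj^{-1}\lambda\jj = \bar\lambda$ for complex $\lambda$, with the ``same right eigenvectors'' statement understood in the sense that the underlying right quaternion module is invariant under the passage $\lambda \mapsto \bar\lambda$. Item 3 follows from the complex adjoint: decompose $A = A_1 + A_2\jj$ with $A_1, A_2 \in {\mathbb C}^{n\times n}$, form
\[
\chi_A = \begin{pmatrix} A_1 & A_2 \\ -\bar A_2 & \bar A_1 \end{pmatrix} \in {\mathbb C}^{2n\times 2n},
\]
and observe that the $2n$ complex eigenvalues of $\chi_A$ split into $n$ conjugate pairs; selecting one representative with nonnegative imaginary part from each pair yields exactly $n$ standard right eigenvalues of $A$.

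For item 4, proceed by induction on $n$. Given a normal $A$, choose a standard right eigenvalue $\lambda_1$ with a unit right eigenvector $\vu_1$, extend $\{\vu_1\}$ to a unitary basis $U_1$ by a quaternion Gram--Schmidt, and use $A^*A = AA^*$ to force the off-diagonal entries of $U_1^*AU_1$ in the first row and column past $\lambda_1$ to vanish; this yields a block-diagonal form $\mathrm{diag}(\lambda_1, A')$ with $A'\in{\mathbb Q}^{(n-1)\times(n-1)}$ still normal, and iteration completes the argument. For the converse in item 4, a direct computation shows that $UDU^*$ is normal whenever $D$ is diagonal. Item 5 follows at once: with $A = UDU^*$ from item 4, $A$ is Hermitian iff $D = D^*$, iff every $\lambda_i$ is real.

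Item 6 combines items 4 and 5. After reordering the diagonal of $D$ by a permutation unitary so that $\lambda_1\ge\cdots\ge\lambda_n$, the change of variables $\vy = U^*\vx$ gives
\[
\vx^*A\vx = \vy^*D\vy = \sum_{i=1}^n \lambda_i|y_i|^2,
\]
from which the positive (semi)definiteness equivalences are read off by taking $\vy$ to be standard coordinate vectors to isolate individual $\lambda_i$. The main obstacle throughout is noncommutativity: one must keep $\lambda$ on the right of $\vx$ in every right-eigenvalue relation, and in the inductive step of item 4 one must verify carefully that normality really forces the off-diagonal blocks to vanish, a step that would be automatic over ${\mathbb C}$ but requires explicit quaternion bookkeeping here.
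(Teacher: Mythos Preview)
The paper does not supply its own proof of Theorem~\ref{t2.1}; the theorem is stated without proof as a collection of known results, with citations to \cite{Ro14, WLZZ18, Zh97}. There is therefore nothing in the paper to compare your argument against directly.

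That said, your outline is a correct sketch of the standard arguments found in those references: the similarity-orbit computation for items~1--2, the $2n\times 2n$ complex adjoint $\chi_A$ for item~3, and the inductive unitary reduction (quaternionic Schur triangularisation) for items~4--6 are exactly the tools used in \cite{Zh97} and \cite{WLZZ18}. Your caveat about item~2 is well placed: read literally, ``with the same right eigenvectors'' is false for non-real $\lambda$, since $A\vx=\vx\lambda$ and $A\vx=\vx\bar\lambda$ together force $\lambda\in\mathbb{R}$; the intended meaning is that the right quaternionic eigenspace $\{\vx q:q\in\mathbb{Q}\}$ is the same for $\lambda$ and $\bar\lambda$, which is precisely what your $\vx\mapsto\vx\jj$ computation shows.
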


For any $\vx \in {\mathbb Q}^n$, if $\vx \not = \0$, then $\vx^*\vx$ is a real positive number.   It is commutative with a quaternion.   Thus, for any $A \in {\mathbb Q}^{n \times n}$ and $\vx \in {\mathbb Q}^n$, $\vx \not = \0$, we have
$$(\vx^*\vx)^{-1}(\vx^*A\vx) = (\vx^*A\vx)(\vx^*\vx)^{-1},$$
and we may denote
$${\vx^*A\vx \over \vx^*\vx} := (\vx^*\vx)^{-1}(\vx^*A\vx) = (\vx^*A\vx)(\vx^*\vx)^{-1}.$$
Thus, we have the following proposition.

\begin{Prop} \label{p2.2}
Suppose that $\lambda \in \mathbb Q$ is a right eigenvalue of $A \in {\mathbb Q}^{n \times n}$, with associated right eigenvector $\vx \in {\mathbb Q}^n$. Then
$$\lambda = {\vx^*A\vx \over \vx^*\vx}.$$
\end{Prop}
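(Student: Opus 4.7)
The plan is to derive the formula by a direct one-line manipulation of the eigenvalue equation, the only subtlety being to handle the noncommutativity of the quaternions carefully. Since $\vx \neq \mathbf 0$, the scalar $\vx^*\vx = \sum_{i=1}^n x_i^* x_i$ is a strictly positive real number, and real numbers commute with every quaternion. Hence the notation $\frac{\vx^*A\vx}{\vx^*\vx}$ introduced just before the proposition is unambiguous, and $(\vx^*\vx)^{-1}$ exists as the ordinary real reciprocal.

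Starting from the defining relation $A\vx = \vx\lambda$, I would left-multiply both sides by $\vx^*$ to obtain $\vx^* A \vx = \vx^*\vx\,\lambda$. Because $\vx^*\vx$ is a positive real scalar, it can be moved freely past $\lambda$ and inverted. Multiplying on the left by $(\vx^*\vx)^{-1}$ therefore yields
\[
\lambda = (\vx^*\vx)^{-1}(\vx^* A \vx) = \frac{\vx^* A \vx}{\vx^* \vx},
\]
which is exactly the claimed identity.

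There is no real obstacle here; the only point one has to be explicit about is why it is legitimate to "divide" by $\vx^*\vx$ in the quaternion setting. This is handled by the displayed identity immediately preceding the proposition, which records that for any nonzero $\vx \in \mathbb Q^n$ and any $A \in \mathbb Q^{n\times n}$ the quantity $(\vx^*\vx)^{-1}(\vx^* A\vx)$ equals $(\vx^* A\vx)(\vx^*\vx)^{-1}$ and so may be written in the fractional form. So the proof is essentially a single line once this preamble is invoked.
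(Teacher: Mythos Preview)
Your argument is correct and matches the paper's approach exactly: the paper itself does not write out a separate proof for this proposition, but treats it as an immediate consequence of the preceding displayed identities (that $\vx^*\vx$ is a positive real number, hence central and invertible), which is precisely the one-line manipulation you describe.
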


%{\color {red} \begin{proof}   The other conclusions can be found in \cite{Ro14, WLZZ18, Zh97}.   We now prove 7.

%Suppose that $A$ is skew-Hermitian.   Then $A$ is normal.  By 4, there is a unitary matrix $U \in {\mathbb Q}^{n \times n}$ such that $U^*AU = D$ is diagonal and the diagonal entries of $D$ are right eigenvalues of $A$.   Since $A$ is skew-Hermitian, $D$ is also skew-Hermitian.  Then the diagonal entries of $D$ must be imaginary.   Since $AU = UD$, the diagonal entries of $D$ are right eigenvalues of $A$.   By 1-3 and Proposition \ref{p2.1}, all the right eigenvalues of $A$ are imaginary.

%On the other hand, suppose that $A$ is normal and all the right eigenvalues of $A$ are imaginary.  By 4,
%$A = UDU^*$, where $U$ is unitary, $D$ is diagonal, and all the diagonal entries of $D$ are right eigenvalues of $A$, thus imaginary.  Then $D$ is skew-Hermitain.   This implies that $A$ is also skew-Hermitian.
%\end{proof}}

%Stander and Wiegmann \cite{SW60} proved the following unitary congruence theorem for quaternion skew-Hermitian matrices.

%\begin{Thm} \label{t2.2}
%Suppose that $A \in {\mathbb Q}^{n \times n}$ is a skew-Hermitian matrix.   Then there is a unitary matrix $U \in {\mathbb Q}^{n \times n}$ such that $\Sigma = UAU^*$ is a block diagonal matrix, each diagonal block of $\Sigma$ is either a $1 \times 1$ matrix of the form $u\jj\ii$ and $-u\jj\ii$, $u > 0$ real, or a $2 \times 2$ matrix of the form
%$$\begin{bmatrix} s\jj\ii & t \\ -t & -s\jj\ii\end{bmatrix}.$$
%\end{Thm}

\subsection{Dual Quaternions}

Denote the set of dual quaternions as $\mathbb {DQ}$.   A dual quaternion $q \in \mathbb {DQ}$ has the form
$$q = q_{st} + q_\I\epsilon,$$
where $q_{st}, q_\I \in \mathbb {Q}$ are the standard part and the infinitesimal part of $q$, respectively. If $q_{st} \not = 0$, then we say that $q$ is appreciable.   We can derive that $q$ is invertible if and only if  $q$ is appreciable. In this case, we have
$$q^{-1} = q_{st}^{-1} - q_{st}^{-1}q_\I q_{st}^{-1} \epsilon.$$

%By \cite{Bl60, HY03}, there are three types of conjugates of a dual quaternion $q \in \mathbb {DQ}$.

The conjugate of $q$ is
$$q^* = q_{st}^* + q_\I^*\epsilon.$$
By this, if $q = q^*$, then $q$ is a dual number.

Denote the collection of $n$-dimensional dual quaternion  vectors by ${\mathbb {DQ}}^n$.

For $\vx = (x_1, x_2,\cdots, x_n)^\top$, $\vy = (y_1, y_2,\cdots, y_n)^\top  \in {\mathbb {DQ}}^n$, define $\vx^*\vy = \sum_{i=1}^n x_i^*y_i$, where $\vx^* = (x_1^*, x_2^*,\cdots, x_n^*)$ is the conjugate transpose of $\vx$.   We say $\vx$ is appreciable if at least one of its component is appreciable.  We also say that $\vx$ and $\vy$ are orthogonal to each other if $\vx^*\vy = 0$.  By \cite{QL21a}, for any $\vx \in {\mathbb {DQ}}^n$, $\vx^*\vx$ is a nonnegative dual number, and if $\vx$ is appreciable, $\vx^*\vx$ is a positive dual number.

Note that a dual number is commutative with a dual quaternion vector.

\section{Eigenvalues of Dual Quaternion Matrices}

Denote the collections of $m \times n$ dual quaternion  matrices by ${\mathbb {DQ}}^{m \times n}$.  Then $A \in {\mathbb {DQ}}^{m \times n}$ can be written as
$$A = A_{st} + A_\I \epsilon,$$
where $A_{st}, A_\I \in {\mathbb {Q}}^{m \times n}$ are the standard part and the infinitesimal part of $A$, respectively. Note that for a dual quaternion matrix $A=A_{st}+A_{\I}\epsilon$, if $A_{st} = O$, the analysis for $A$ will be analogous to that for the quaternion matrix $A_\I$. Thus, unless otherwise stated, we will assume that the dual quaternion matrix $A$ is appreciable throughout the paper, i.e., $A_{st}\neq O$.

The conjugate transpose of $A$ is
$$A^* = A_{st}^* + A_\I^* \epsilon.$$

A square dual quaternion matrix $A \in {\mathbb {DQ}}^{n \times n}$ is called normal if $A^*A = AA^*$, Hermitian if $A^* = A$; unitary if $A^*A = I$; and invertible (nonsingular) if $AB = BA = I$ for some $B \in {\mathbb {DQ}}^{n \times n}$.  In that case, we denote $A^{-1} = B$.

We have $(AB)^{-1} = B^{-1}A^{-1}$ if $A$ and $B$ are invertible, and $\left(A^*\right)^{-1} = \left(A^{-1}\right)^*$ if $A$ is invertible.

Suppose that $A \in {\mathbb {DQ}}^{n \times n}$ is an Hermitian matrix.  For any $\vx \in {\mathbb {DQ}}^n$, we have
$$(\vx^*A\vx)^* = \vx^*A\vx.$$
This implies that $\vx^*A\vx$ is a dual number.  With the total order of dual numbers defined in Section 2, we may define positive semidefiniteness and positive definiteness of dual quaternion Hermitian matrices.  A dual quaternion Hermitian matrix $A \in {\mathbb {DQ}}^{n \times n}$ is called positive semidefinite if for any $\vx \in {\mathbb {DQ}}^n$, $\vx^*A\vx \ge 0$; $A$ is called positive definite if for any $\vx \in {\mathbb {DQ}}^n$ with $\vx$ being appreciable,  we have $\vx^*A\vx > 0$ and is appreciable.
A square quaternion matrix $A \in {\mathbb {DQ}}^{n \times n}$ is unitary if and only if its column (row) vectors form an orthonormal basis of ${\mathbb {DQ}}^n$.  Let $2 \le k \le n$.  We say that $A \in {\mathbb {DQ}}^{n \times k}$ is partially unitary if its column vectors are unit vectors and orthogonal to each other.

Then, $A$ is a dual quaternion Hermitian matrix if and only if $A_{st}$ and $A_\I$ are two quaternion Hermitian matrices.

%, and $A_\I$ is a quaternion skew-symmetric matrix.   %This is the relation between the spectral theory of dual quaternion matrices and the spectral theorem of quaternion skew-symmetric matrices, i.e., a canonical form of quaternion skew-symmetric matrices under congruence.

Suppose that $A \in {\mathbb {DQ}}^{n \times n}$.  If there are $\lambda \in \mathbb {DQ}$, $\vx \in {\mathbb {DQ}}^n$, where $\vx$ is appreciable, such that
\begin{equation}
A\vx = \vx\lambda,
\end{equation}
then we say that $\lambda$ is a right eigenvalue of $A$, with $\vx$ as an associated right eigenvector.
If there are $\lambda \in \mathbb {DQ}$, $\vx \in {\mathbb {DQ}}^n$, where $\vx$ is appreciable, such that
$$A\vx = \lambda\vx,$$
then we say that $\lambda$ is a left eigenvalue of $A$, with $\vx$ as an associated left eigenvector.   If $\lambda$ is a dual number and a right eigenvalue of $A$, then it is also a left eigenvalue of $A$, as  a dual number is commutative with a dual quaternion vector.   In this case, we simply say that it is an eigenvalue of $A$, with $\vx$ as an associated eigenvector.

For any $\vx \in {\mathbb {DQ}}^n$, if $\vx$ is appreciable, then $\vx^*\vx$ is an appreciable positive dual number.   It is commutative with a dual quaternion.   Thus, for any $A \in {\mathbb {DQ}}^{n \times n}$ and $\vx \in {\mathbb {DQ}}^n$, $\vx$ appreciable, we have
$$(\vx^*\vx)^{-1}(\vx^*A\vx) = (\vx^*A\vx)(\vx^*\vx)^{-1},$$
and we may denote
$${\vx^*A\vx \over \vx^*\vx} := (\vx^*\vx)^{-1}(\vx^*A\vx) = (\vx^*A\vx)(\vx^*\vx)^{-1}.$$
Similar to Proposition \ref{p2.2}, we have the following proposition.

\begin{Prop} \label{p3.0}
Suppose that $\lambda \in \mathbb {DQ}$ is a right eigenvalue of $A \in {\mathbb {DQ}}^{n \times n}$, with associated right eigenvector $\vx \in {\mathbb {DQ}}^n$. Then
\begin{equation} \label{e5}
\lambda = {\vx^* A\vx \over \vx^*\vx}.
\end{equation}
\end{Prop}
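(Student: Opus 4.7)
The plan is to mimic the quaternion argument from Proposition \ref{p2.2}, the only extra care being the invocation of appreciability to guarantee invertibility of $\vx^*\vx$ in the dual quaternion setting. Starting from the defining equation $A\vx = \vx\lambda$, I would left-multiply both sides by $\vx^*$ to obtain
\begin{equation*}
\vx^* A\vx = (\vx^*\vx)\lambda.
\end{equation*}
Because $\vx$ is appreciable, the preliminaries (following the remark cited from \cite{QL21a}) tell us that $\vx^*\vx$ is an appreciable positive dual number, so it is invertible with inverse a dual number as well.

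Next I would use the fact, recorded in the paragraph just before the proposition, that a dual number commutes with every dual quaternion: since $\vx^*\vx \in \mathbb{D}$, both $(\vx^*\vx)^{-1}$ and $\vx^*\vx$ commute with $\lambda$. Left-multiplying the displayed identity by $(\vx^*\vx)^{-1}$ therefore gives
\begin{equation*}
\lambda = (\vx^*\vx)^{-1}(\vx^* A\vx) = (\vx^* A\vx)(\vx^*\vx)^{-1} = {\vx^* A\vx \over \vx^*\vx},
\end{equation*}
where the last equality is exactly the notational convention introduced immediately before the proposition.

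There is really no substantive obstacle here; the whole content lies in being allowed to invert $\vx^*\vx$ and to slide it past $\lambda$. The first point is the nontrivial one relative to the classical setting, since in $\mathbb{DQ}$ not every nonzero element is invertible — the appreciability hypothesis on $\vx$ is exactly what is needed to rule out the infinitesimal case, and it is imposed in the definition of a right eigenvector given just above the proposition statement.
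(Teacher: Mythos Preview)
Your proposal is correct and matches the paper's approach exactly: the paper simply declares the result ``similar to Proposition~\ref{p2.2}'' without writing out a proof, and your argument is precisely the natural unpacking of that remark. The only ingredients are left-multiplication by $\vx^*$, invertibility of the appreciable dual number $\vx^*\vx$, and its commutativity with $\lambda$, all of which you invoke correctly.
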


Suppose that $\lambda$ is a right eigenvalue of $A \in {\mathbb {DQ}}^{n \times n}$, with $\vx$ as an associated right eigenvector.   If $q \in \mathbb {DQ}$ is appreciable, then we have $A\vx = \vx\lambda$ implies $A(\vx q) = (A\vx)q = \vx \lambda q = (\vx q)(q^{-1}\lambda q)$.   Thus, if $\lambda$ is a right eigenvalue of $A$, then any dual quaternion in $[\lambda]$ is also a right eigenvalue of $A$.

We have the following theorem.

\begin{Thm} \label{t3.1}
Suppose that $A = A_{st}+A_\I\epsilon \in {\mathbb {DQ}}^{n \times n}$.   Then $\lambda = \lambda_{st} + \lambda_\I \epsilon$ is a right eigenvalue of $A$ with a right eigenvector $\vx_{st}+\vx_I\epsilon$ only if $\lambda_{st}$ is a right eigenvalue of the quaternion matrix $A_{st}$ with a
right eigenvector $\vx_{st}$, i.e., $\vx_{st} \not = \0$ and
\begin{equation} \label{e3}
A_{st}\vx_{st} = \vx_{st}\lambda_{st}.
\end{equation}
We have
\begin{equation} \label{e6}
\lambda_{st} = {\vx_{st}^* A_{st}\vx_{st} \over \vx_{st}^*\vx_{st}}.
\end{equation}

Furthermore, if $\lambda_{st}$ is a right eigenvalue of the quaternion matrix $A_{st}$ with a right
eigenvector $\vx_{st}$, then $\lambda$ is a right eigenvalue of $A$ with a right eigenvector $\vx$ if and only if $\lambda_\I$ and $\vx_\I$ satisfy
\begin{equation} \label{e4}
\vx_{st}\lambda_\I = A_\I\vx_{st} + A_{st}\vx_\I - \vx_\I \lambda_{st}.
\end{equation}
\end{Thm}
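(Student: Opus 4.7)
The plan is to substitute the expansions $A = A_{st}+A_\I\epsilon$, $\vx = \vx_{st}+\vx_\I\epsilon$, and $\lambda = \lambda_{st}+\lambda_\I\epsilon$ directly into the defining relation $A\vx = \vx\lambda$, then split the identity into its standard and infinitesimal components using $\epsilon^2 = 0$ and the fact that $\epsilon$ commutes with every quaternion. Carrying out the multiplication on both sides yields
\begin{equation*}
A_{st}\vx_{st} + (A_{st}\vx_\I + A_\I\vx_{st})\epsilon \;=\; \vx_{st}\lambda_{st} + (\vx_{st}\lambda_\I + \vx_\I\lambda_{st})\epsilon.
\end{equation*}
Matching standard parts gives exactly (\ref{e3}), while matching infinitesimal parts and transposing the term $\vx_\I\lambda_{st}$ yields exactly (\ref{e4}). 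Thus $A\vx = \vx\lambda$ is equivalent to the simultaneous validity of (\ref{e3}) and (\ref{e4}).

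From this equivalence both assertions of the theorem follow quickly. For the ``only if'' statement in the first sentence, the assumption that $\vx$ is an eigenvector means $\vx$ is appreciable, which forces $\vx_{st}\neq \0$; together with (\ref{e3}) this identifies $\vx_{st}$ as a right eigenvector of $A_{st}$ associated with $\lambda_{st}\in\mathbb{Q}$, so $\lambda_{st}$ is indeed a right eigenvalue of the quaternion matrix $A_{st}$. Formula (\ref{e6}) is then immediate from Proposition~\ref{p2.2} applied to the quaternion pair $(A_{st},\vx_{st})$. For the ``furthermore'' part, once (\ref{e3}) is granted the full equation $A\vx = \vx\lambda$ reduces precisely to its infinitesimal component, which is (\ref{e4}); hence given $\lambda_{st}$ and $\vx_{st}$ satisfying (\ref{e3}), the pair $(\lambda,\vx)$ is an eigenpair of $A$ if and only if $\lambda_\I,\vx_\I$ satisfy (\ref{e4}).

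There is no essential obstacle in the argument: it amounts to careful bookkeeping in the dual-number algebra. The one point worth being explicit about is the appreciability of $\vx$, which is what lets us conclude $\vx_{st}\neq\0$ and hence promotes (\ref{e3}) from a formal identity to an honest right-eigenvalue relation for $A_{st}$. The genuine content of the statement is really equation (\ref{e4}): viewed as a linear equation in the unknowns $(\lambda_\I,\vx_\I)$ for fixed $(A,\lambda_{st},\vx_{st})$, it is the tool on which the subsequent analysis, in particular the question of when a right eigenvalue of $A_{st}$ lifts to a right eigenvalue of $A$, will hinge.
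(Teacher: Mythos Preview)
Your proof is correct and follows essentially the same approach as the paper: expand $A\vx=\vx\lambda$ in standard and infinitesimal parts, match coefficients to obtain (\ref{e3}) and (\ref{e4}), use appreciability of $\vx$ to ensure $\vx_{st}\neq\0$, and invoke Proposition~\ref{p2.2} for (\ref{e6}). The paper's own proof is in fact terser than yours; your explicit handling of the appreciability point is a welcome clarification.
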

\begin{proof}
By definition, $\vx$ is a right eigenvalue of $A$ with a right eigenvector $\vx$ if and only if $A\vx = \vx \lambda$ and $\vx_{st} \not = \0$.   We may write $A\vx = \vx \lambda$ as
$$(A_{st}+A_\I\epsilon)(\vx_{st}+\vx_I\epsilon) = (\vx_{st}+\vx_I\epsilon)(\lambda_{st} + \lambda_\I \epsilon).$$
This is equivalent to (\ref{e3}) and (\ref{e4}).  By Proposition \ref{p2.2}, we have (\ref{e6}).
\end{proof}

The following theorem indicates that the right eigenvalues of a dual quaternion Hermitian matrix must be a dual number.

\begin{Thm} \label{t3.2}
A right eigenvalue $\lambda$ of an Hermitian matrix $A = A_{st} + A_\I\epsilon \in {\mathbb {DQ}}^{n \times n}$ must be a dual number, hence an eigenvalue of $A$, and its standard part $\lambda_{st}$ is a right eigenvalue of the quaternion Hermitian matrix $A_{st}$.   Furthermore, assume that $\lambda = \lambda_{st} + \lambda_\I \epsilon$,
$\vx = \vx_{st} + \vx_\I \epsilon \in {\mathbb {DQ}}^n$ is an eigenvector of $A$, associate with the eigenvalue $\lambda$,
where $\vx_{st}, \vx_\I \in {\mathbb Q}^n$.   Then we have
\begin{equation} \label{e7}
\lambda_\I = {\vx_{st}^* A_\I \vx_{st} \over \vx_{st}^*\vx_{st}}.
\end{equation}

A dual quaternion Hermitian matrix has at most $n$ dual number eigenvalues and no other right eigenvalues.

An eigenvalue of a positive semidefinite Hermitian matrix $A \in {\mathbb {DQ}}^{n \times n}$ must be a nonnegative dual number.   In that case, $A_{st}$ must be positive semidefinite.
An eigenvalue of a positive definite Hermitian matrix $A \in {\mathbb {DQ}}^{n \times n}$ must be an appreciable positive dual number.   In that case, $A_{st}$ must be positive definite.
\end{Thm}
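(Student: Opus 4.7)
The plan is to extract the theorem mainly from Proposition \ref{p3.0} and Theorem \ref{t3.1}, together with the quaternion Hermitian theory collected in Theorem \ref{t2.1}. By Proposition \ref{p3.0}, $\lambda = \vx^*A\vx/\vx^*\vx$; since $A^* = A$, the numerator is self-conjugate and hence a dual number, while $\vx^*\vx$ is a positive real dual number because $\vx$ is appreciable, so $\lambda$ is a dual number. Because dual numbers commute with dual quaternion vectors, $\lambda$ is also a left eigenvalue, hence an eigenvalue. By Theorem \ref{t3.1} its standard part $\lambda_{st}$ is a right eigenvalue of the quaternion Hermitian matrix $A_{st}$, which by Theorem \ref{t2.1}(5) must be real, consistent with $\lambda$ being a dual number.

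Next I would multiply equation (\ref{e4}) on the left by $\vx_{st}^*$. Since $\lambda_{st}$ is real it commutes with everything, and since $A_{st}\vx_{st} = \vx_{st}\lambda_{st}$ together with $A_{st}^* = A_{st}$ gives $\vx_{st}^*A_{st} = \lambda_{st}\vx_{st}^*$, the terms $\vx_{st}^*A_{st}\vx_\I$ and $\vx_{st}^*\vx_\I\lambda_{st}$ both equal $\lambda_{st}\vx_{st}^*\vx_\I$ and so cancel, leaving $(\vx_{st}^*\vx_{st})\lambda_\I = \vx_{st}^*A_\I\vx_{st}$. Dividing by the positive real scalar $\vx_{st}^*\vx_{st}$ yields (\ref{e7}).

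For the counting statement, every eigenvalue of $A$ has its standard part among the real right eigenvalues of $A_{st}$ furnished by Theorem \ref{t2.1}(6). Fixing such a $\mu$ with eigenspace $V_\mu$, the solvability of (\ref{e4}) in $\vx_\I$ requires the right-hand side $A_\I\vx_{st} - \vx_{st}\lambda_\I$ to be orthogonal to $V_\mu$ (since $A_{st}-\mu I$ is quaternion Hermitian); projecting onto $V_\mu$ reveals that the admissible $\lambda_\I$ are exactly the real eigenvalues of the quaternion Hermitian operator induced by $A_\I$ on $V_\mu$, of which there are at most $\dim V_\mu$. Summing over $\mu$ gives the bound $n$. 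I expect this step---properly handling repeated standard parts---to be the main technical point; the sharper ``exactly $n$'' count is deferred to Section 4.

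Finally, the positive (semi)definite conclusions come from $\lambda = \vx^*A\vx/\vx^*\vx$. If $A$ is positive semidefinite, the numerator is $\ge 0$ and the denominator appreciable positive, so $\lambda \ge 0$; to see $A_{st}$ is positive semidefinite, take any $\0 \neq \vy \in {\mathbb Q}^n$ and view it as an appreciable $\vx \in {\mathbb {DQ}}^n$, obtaining $\vy^*A_{st}\vy + (\vy^*A_\I\vy)\epsilon \ge 0$ whose standard part forces $\vy^*A_{st}\vy \ge 0$. The positive definite case is identical with ``$\ge 0$'' replaced by ``appreciable positive'' throughout, using the appreciability of $\vx^*\vx$ to upgrade the quotient.
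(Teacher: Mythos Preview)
Your argument is correct and runs along the same lines as the paper's: both hinge on the Rayleigh quotient $\lambda = (\vx^*A\vx)/(\vx^*\vx)$ and on expanding the eigenvalue equation into standard and infinitesimal parts. The organizational differences are minor. You first infer that $\lambda$ is a dual number directly from the self-conjugacy of $\vx^*A\vx$ (since $A^*=A$) and then derive (\ref{e7}) by left-multiplying (\ref{e4}) by $\vx_{st}^*$; the paper instead expands the infinitesimal part of $\vx^*A\vx=\vx^*\vx\,\lambda$, obtains (\ref{e7}) explicitly, and reads off that $\lambda_\I$ is real from that formula. These are equivalent computations. One small wording fix: $\vx^*\vx$ is a \emph{positive appreciable dual number}, not in general ``real''; but the quotient of two dual numbers with appreciable denominator is again a dual number, so your conclusion stands.

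Your treatment of the ``at most $n$'' count is actually more explicit than the paper's. The paper simply writes that ``the other conclusions follow from Theorems \ref{t2.1} and \ref{t3.1}'', effectively deferring the full counting argument to the unitary decomposition in Section~4. Your observation---that for each eigenspace $V_\mu$ of $A_{st}$ the solvability of (\ref{e4}) forces $\lambda_\I$ to be a (real) eigenvalue of the compression of $A_\I$ to $V_\mu$, hence at most $\dim V_\mu$ values---is precisely the mechanism behind Theorem \ref{t4.1} and supplies the detail the paper omits at this point.
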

\begin{proof}   Suppose that $A \in {\mathbb {DQ}}^{n \times n}$ is an Hermitian matrix, and $\lambda$ is a right eigenvalue of $A$.  Let $\vx$ be the corresponding right eigenvector.    Then we have $A\vx = \vx \lambda$, and $\vx$ is appreciable.    We have
\begin{equation} \label{e8}
\vx^*A\vx = \vx^*\vx \lambda.
\end{equation}
Substitute $\vx = \vx_{st} + \vx_\I \epsilon$, $A = A_{st} + A_\I \epsilon$, $\lambda = \lambda_{st} + \lambda_\I \epsilon$ to (\ref{e8}), and consider the infinitesimal part of the equality.  We have
$$(\vx_{\I}^* \vx_{st} + \vx_{st}^*\vx_\I )\lambda_{st} + \vx_{st}^*\vx_{st}\lambda_\I =
\vx_\I^*A_{st}\vx_{st} + \vx_{st}^*A_{st}\vx_\I + \vx_{st}^*A_\I \vx_{st}.$$
Since $A_{st}\vx_{st} = \vx_{st}\lambda_{st}$, $\vx_{st}^*A_{st} = \lambda_{st}\vx_{st}^*$ and $\lambda_{st}$ is a real number, the above equality reduces to
$$\vx_{st}^*\vx_{st}\lambda_\I = \vx_{st}^*A_\I \vx_{st}.$$
This proves (\ref{e7}). Hence, $\lambda$ is a dual number and an eigenvalue of $A$.

Now, the other conclusions follow from Theorems \ref{t2.1} and \ref{t3.1}.
\end{proof}

Furthermore, we have the following two theorems.

\begin{Thm} \label{t3.3}
Suppose that $A = A_{st} + A_\I\epsilon \in {\mathbb {DQ}}^{n \times n}$ is an Hermitian matrix, and $\lambda_{st}$ is a simple right eigenvalue of $A_{st}$ with a right eigenvector $\vx_{st}$.   Then $\lambda = \lambda_{st} + \lambda_\I \epsilon$ is an eigenvalue of $A$, where $\lambda_\I$ is calculated by (\ref{e7}).   Furthermore, if $A_{st}$ has n simple right eigenvalues $\lambda_{st,i}$'s with associated unit right eigenvectors $\vx_{st,i}$'s. Then $A$ has exactly $n$  eigenvalues
\begin{equation}
\lambda_i = \lambda_{st,i}+ \lambda_{\I,i}\epsilon,
\end{equation}
with associated eigenvectors $\vx_i = \vx_{st,i} + \vx_{\I,i}\epsilon$, $i=1,\cdots, n$, where
\begin{equation}\label{equation 10}
\lambda_{\I,i}= \vx_{st,i}^*A_\I\vx_{st,i}, ~~\vx_{\I, i} = \sum\limits_{j\neq i} \frac{\vx_{st,j}\vx_{st,j}^*(A_{\I}-\lambda_{\I,i} I_n )\vx_{st,i}}{\lambda_{st,i}-\lambda_{st,j}},~~i=1,\cdots,n
\end{equation}

\end{Thm}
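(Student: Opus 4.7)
The plan is to invoke Theorem~\ref{t3.1}: $\lambda=\lambda_{st}+\lambda_\I\epsilon$ with eigenvector $\vx_{st}+\vx_\I\epsilon$ is a right eigenvalue of $A$ iff the hypothesis $A_{st}\vx_{st}=\vx_{st}\lambda_{st}$ holds together with (\ref{e4}). Since $A_{st}$ is quaternion Hermitian, Theorem~\ref{t2.1}(5)--(6) forces $\lambda_{st}$ to be real, so it commutes with $\vx_\I$, and (\ref{e4}) is equivalent to
\begin{equation}\label{eq:solve}
(A_{st}-\lambda_{st}I_n)\vx_\I=\vx_{st}\lambda_\I-A_\I\vx_{st}.
\end{equation}
The entire theorem reduces to studying this linear quaternion equation.

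For the first assertion, I would apply a Fredholm-alternative argument to the Hermitian quaternion operator $M:=A_{st}-\lambda_{st}I_n$. Since $M^*=M$, the simplicity of $\lambda_{st}$ makes $\ker M$ the one-dimensional right ${\mathbb Q}$-line spanned by $\vx_{st}$, and Theorem~\ref{t2.1}(6) identifies the range of $M$ with the $\vx^*\vy$-orthogonal complement of $\ker M$. Hence (\ref{eq:solve}) is solvable iff $\vx_{st}^*(\vx_{st}\lambda_\I-A_\I\vx_{st})=0$, which, by commutativity of the real $\vx_{st}^*\vx_{st}$, reduces exactly to the formula (\ref{e7}) for $\lambda_\I$. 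With this $\lambda_\I$, a solution $\vx_\I$ exists, so $\lambda$ is a right eigenvalue of $A$, hence by Theorem~\ref{t3.2} an eigenvalue.

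For the explicit formula in (\ref{equation 10}), I would use that under the full simplicity hypothesis the $\vx_{st,j}$ form an orthonormal basis of ${\mathbb Q}^n$ (Theorem~\ref{t2.1}(4),(6)) and expand $\vx_{\I,i}=\sum_{j=1}^{n}\vx_{st,j}c_{ij}$ with right-${\mathbb Q}$ coefficients $c_{ij}$. Substituting in (\ref{eq:solve}) (taken with $\lambda_{st}=\lambda_{st,i}$) and using $A_{st}\vx_{st,j}=\vx_{st,j}\lambda_{st,j}$ collapses the left-hand side to $\sum_j\vx_{st,j}(\lambda_{st,j}-\lambda_{st,i})c_{ij}$; expanding the right-hand side via $I_n=\sum_j\vx_{st,j}\vx_{st,j}^*$ exhibits its $\vx_{st,j}$-coefficient as $-\vx_{st,j}^*(A_\I-\lambda_{\I,i}I_n)\vx_{st,i}$. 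The $j=i$ coefficient identity is precisely the definition of $\lambda_{\I,i}$, so $c_{ii}$ is unconstrained; for $j\ne i$, dividing by the nonzero real $\lambda_{st,i}-\lambda_{st,j}$ yields $c_{ij}=\vx_{st,j}^*(A_\I-\lambda_{\I,i}I_n)\vx_{st,i}/(\lambda_{st,i}-\lambda_{st,j})$. Choosing $c_{ii}=0$ gives (\ref{equation 10}). The $n$ resulting eigenvalues $\lambda_i$ have pairwise distinct standard parts and so are pairwise distinct dual numbers, and Theorem~\ref{t3.2} caps the eigenvalue count of $A$ at $n$, so these exhaust the spectrum.

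The main obstacle is making the Fredholm-alternative step rigorous in the noncommutative setting: one must confirm that for a quaternion Hermitian $M$, $\mathrm{range}(M)$ and $\ker M$ are $\vx^*\vy$-orthogonal complements of each other inside ${\mathbb Q}^n$ as right ${\mathbb Q}$-modules. Everything downstream hinges on the reality of the eigenvalues of $A_{st}$, which is what permits $\lambda_{st,i}$ to slip past quaternion vectors and what renders the division by $\lambda_{st,i}-\lambda_{st,j}$ meaningful in the formula for $c_{ij}$.
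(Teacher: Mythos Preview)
Your proposal is correct and follows essentially the same route as the paper: both reduce to the linear quaternion equation obtained from (\ref{e4}) with real $\lambda_{st}$ (the paper's (\ref{e9})), invoke the spectral theory of $A_{st}$ from Theorem~\ref{t2.1} to get the solvability/orthogonality criterion (\ref{e7}), and use the orthonormal eigenbasis of $A_{st}$ for the second part. The only cosmetic difference is that for the formula in (\ref{equation 10}) you \emph{derive} the coefficients by expanding $\vx_{\I,i}$ in that basis, whereas the paper simply substitutes the stated $\vx_{\I,i}$ into $(\lambda_{st,i}I_n-A_{st})\vx_{\I,i}$ and \emph{verifies} directly that it equals $A_\I\vx_{st,i}-\vx_{st,i}\lambda_{\I,i}$.
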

\begin{proof}  Since $A$ is a dual quaternion Hermitian matrix, $A_{st}$ is a quaternion Hermitian matrix and $\lambda_{st}$ is real.
Then (\ref{e4}) becomes
\begin{equation} \label{e9}
A_\I \vx_{st} - \vx_{st}\lambda_\I = (\lambda_{st} I_n - A_{st})\vx_\I.
\end{equation}

Since $\lambda_{st}$ is a simple right eigenvalue of $A_{st}$, by Theorem \ref{t2.1} 4-5 and Theorem \ref{t3.1},  (\ref{e9}) has a solution $\vx_\I$ if and only if
$$\vx_{st}^*A_\I\vx_{st} - \vx_{st}^*\vx_{st}\lambda_\I= 0.$$
This is true by (\ref{e7}).

This implies (\ref{e9}) has a solution $\vx_\I$.   By Theorems \ref{t3.1} and \ref{t3.2}, $\lambda$ is an eigenvalue of $A$.

For the furthermore part, it suffices to show that $A_\I \vx_{st,i} - \vx_{st,i}\lambda_{\I,i} = (\lambda_{st,i} I_n - A_{st})\vx_{\I,i}$, $i=1,\cdots,n$. By Theorem \ref{t2.1}, we have
$$I_n = \sum\limits_{j=1}^n \vx_{st,j}\vx_{st,j}^*  ~\mathrm{and}~~ A_{st} = \sum\limits_{j=1}^n \lambda_{st,j}\vx_{st,j}\vx_{st,j}^*.$$
It follows readily that
\begin{eqnarray}\label{x_{I,i}}
& & (\lambda_{st,i} I_n - A_{st})\vx_{\I,i} \nonumber\\
& = & (\lambda_{st,i} I_n - A_{st}) \sum\limits_{j\neq i} \frac{\vx_{st,j}\vx_{st,j}^*(A_{\I}-\lambda_{\I,i} I_n )\vx_{st,i}}{\lambda_{st,i}-\lambda_{st,j}} \nonumber\\
& = & \left(\sum\limits_{j\neq i}\left(\lambda_{st,i}-\lambda_{st,j}\right) \vx_{st,j}\vx_{st,j}^*\right)\left(\sum\limits_{j\neq i} \frac{\vx_{st,j}\vx_{st,j}^*}{\lambda_{st,i}-\lambda_{st,j}}\right) (A_{\I}-\lambda_{\I,i} I_n )\vx_{st,i} \nonumber\\
& = & \left(I_n- \vx_{st,i}\vx_{st,i}^*\right)(A_{\I}-\lambda_{\I,i} I_n )\vx_{st,i} \nonumber\\
& = & A_\I \vx_{st,i} - \lambda_{\I,i} \vx_{st,i} -\vx_{st,i}\vx_{st,i}^*A_{\I}\vx_{st,i}+ \lambda_{\I,i} \vx_{st,i}\vx_{st,i}^*\vx_{st,i}  \nonumber\\
& = & A_\I \vx_{st,i} -\vx_{st,i}\vx_{st,i}^*A_{\I}\vx_{st,i} \nonumber\\
& = & A_\I \vx_{st,i} - \vx_{st,i}\lambda_{\I,i},\nonumber
\end{eqnarray}
where the fifth equality is due to $\vx_{st,i}^*\vx_{st,i}=1$, and the last equality is from the formula of $\lambda_{\I,i}$ as defined in \eqref{equation 10}.
This completes the proof.

\end{proof}

For eigenvectors of a dual quaternion Hermitian matrix, we have the following proposition.

\begin{Prop} \label{p3.4}
Two eigenvectors of an Hermitian matrix $A \in {\mathbb {DQ}}^{n \times n}$, associated with two  eigenvalues with distinct standard parts, are orthogonal to each other.
\end{Prop}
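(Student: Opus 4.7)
The plan is to imitate the classical argument for ordinary Hermitian matrices, leaning on two crucial facts from the preceding material: by Theorem \ref{t3.2} every eigenvalue of a dual quaternion Hermitian matrix is a dual number, hence commutes with any dual quaternion scalar and equals its own conjugate; and, as recalled in Section 2.4, a dual number is invertible in $\mathbb{DQ}$ exactly when its standard part is nonzero.

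Let $\vx, \vy \in {\mathbb{DQ}}^n$ be appreciable eigenvectors associated with eigenvalues $\lambda$ and $\mu$ with $\lambda_{st} \ne \mu_{st}$. I would compute $\vy^*A\vx$ in two different ways. First, from $A\vx = \vx\lambda$ together with the commutativity of the dual number $\lambda$ with the dual quaternion scalar $\vy^*\vx$,
\[
\vy^*A\vx \;=\; \vy^*(\vx\lambda) \;=\; (\vy^*\vx)\lambda \;=\; \lambda(\vy^*\vx).
\]
Second, using $A^* = A$ together with $(AB)^* = B^*A^*$ (so $\vy^*A = (A\vy)^*$), and using $A\vy = \vy\mu$ with $\mu^* = \mu$ for dual numbers,
\[
\vy^*A\vx \;=\; (A\vy)^*\vx \;=\; (\vy\mu)^*\vx \;=\; \vy^*\mu^*\vx \;=\; \mu(\vy^*\vx).
\]
Subtracting the two identities yields $(\lambda - \mu)(\vy^*\vx) = 0$.

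The hypothesis $\lambda_{st} \ne \mu_{st}$ says precisely that the standard part of $\lambda - \mu$ is nonzero, so $\lambda - \mu$ is an appreciable dual number, and is therefore invertible in $\mathbb{DQ}$. Multiplying on the left by $(\lambda - \mu)^{-1}$ then forces $\vy^*\vx = 0$, which is the desired orthogonality.

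The only delicate point, and hence the most likely place to slip, is the noncommutativity of dual quaternions: one must invoke Theorem \ref{t3.2} at each step where $\lambda$ or $\mu$ is moved past a vector or pulled out of a conjugate transpose, to ensure the scalar in play really is a dual number and therefore central and self-conjugate. Once that bookkeeping is in place, the argument reduces to the classical Hermitian proof, with invertibility of $\lambda - \mu$ playing the role that nonvanishing plays over $\mathbb{R}$ or $\mathbb{C}$.
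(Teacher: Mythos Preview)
Your proof is correct and follows essentially the same route as the paper's: compute the bilinear form $\vy^*A\vx$ (the paper uses $\vx^*A\vy$) two ways, use Theorem \ref{t3.2} to justify that $\lambda$ and $\mu$ are central and self-conjugate, and then cancel by invertibility of the appreciable dual number $\lambda-\mu$. The only cosmetic difference is which eigenvector sits on the left, so the paper concludes $\vx^*\vy=0$ while you conclude $\vy^*\vx=0$; these are equivalent.
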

\begin{proof}
Suppose $\vx$ and $\vy$ are two  eigenvectors of an Hermitian matrix $A \in {\mathbb {DQ}}^{n \times n}$, associated with two eigenvalues $\lambda = \lambda_{st} + \lambda_\I \epsilon$ and $\mu = \mu_{st} + \mu_\I \epsilon$, respectively, and $\lambda_{st} \not = \mu_{st}$.   By Theorem \ref{t3.2}, $\lambda$ and $\mu$ are dual numbers.  They are commutative with a dual quaternion.
We have
$$\lambda(\vx^*\vy) = (\vx\lambda)^*\vy = (A\vx)^*\vy = \vx^*A\vy = \vx^*\vy\mu = \mu \vx^*\vy,$$
i.e.,
$$(\lambda - \mu )(\vx^*\vy) = 0.$$
Since $\lambda_{st} \not = \mu_{st}$, $(\lambda - \mu)^{-1}$ exists.   We have $\vx^*\vy = 0$.
\end{proof}

Note that for this proposition, only $\lambda \not = \mu$ is not enough.

If $A_{st}$ has multiple eigenvalues, then the situation is somewhat complicated.   We solve this problem by presenting a unitary decomposition for dual quaternion Hermitian matrices in the next section.

\section{Unitary Decomposition of Dual Quaternion Hermitian Matrices}

Now we present the unitary decomposition of a dual quaternion Hermitian matrix.

\begin{Thm} \label{t4.1}
Suppose that $A=A_{st}+A_\I \in {\mathbb {DQ}}^{n \times n}$ is an Hermitian matrix.  Then there are unitary matrix $U \in {\mathbb {DQ}}^{n \times n}$ and a diagonal matrix $\Sigma \in {\mathbb {D}}^{n \times n}$ such that $\Sigma = U^*AU$, where
\begin{equation} \label{eee1}
\Sigma\equiv {\rm diag}\left(\lambda_1+\lambda_{1,1}\epsilon,\cdots, \lambda_1+\lambda_{1,k_1}\epsilon, \lambda_2+\lambda_{2,1}\epsilon,\cdots, \lambda_r+\lambda_{r,k_r}\epsilon\right).
\end{equation}
with the diagonal entries of $\Sigma$ being $n$  eigenvalues of $A$,
\begin{equation} \label{eee2}
A\vu_{i, j} = \vu_{i, j}(\lambda_i +\lambda_{i, j}\epsilon),
\end{equation}
for $j = 1, \cdots, k_i$ and $i = 1, \cdots, r$, $U = (\vu_{1,1}, \cdots, \vu_{1, k_1}, \cdots, \vu_{r, k_r})$,
$\lambda_1 > \lambda_2 > \cdots > \lambda_r$ are real numbers, $\lambda_i$ is a $k_i$-multiple right eigenvalue of $A_{st}$, $\lambda_{i, 1} \ge \lambda_{i, 2} \ge \cdots \ge \lambda_{i, k_i}$ are also real numbers.   Counting possible multiplicities $\lambda_{i, j}$, the form $\Sigma$ is unique.

%and each block of $\Sigma$ is of the form $\lambda_iI_{k_i} + J_i\epsilon$,
%where $\lambda_i$ is a real number, $I_{k_i}$ is the $k_i \times k_i$ identity matrix, $J_i \in {\mathbb Q}^{k_i \times k_i}$ is an Hermitian matrix. If $A_{st}$ is positive semi-definite, then all $\lambda_i$ are nonnegative. If $A_{st}$ is positive definite, then all $\lambda_i$ are positive.

\end{Thm}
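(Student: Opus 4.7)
My approach is to build $U$ in two stages: a quaternion unitary $V$ handling the standard part $A_{st}$ together with the diagonal blocks of the resulting infinitesimal error, followed by a skew-Hermitian perturbation $H$ killing the remaining off-diagonal blocks. First, by Theorem \ref{t2.1}(6) applied to $A_{st}$, I pick a quaternion unitary $V_0$ with $V_0^*A_{st}V_0 = D_{st} := \mathrm{diag}(\lambda_1 I_{k_1},\dots,\lambda_r I_{k_r})$, where $\lambda_1>\cdots>\lambda_r$ are the distinct real eigenvalues of $A_{st}$ with multiplicities $k_1,\dots,k_r$. Setting $B := V_0^*A_\I V_0$ (a quaternion Hermitian matrix) and partitioning $B=(B_{ij})$ into $k_i\times k_j$ blocks, each diagonal block $B_{ii}$ is itself quaternion Hermitian; Theorem \ref{t2.1}(6) furnishes a unitary $W_i\in\mathbb{Q}^{k_i\times k_i}$ with $W_i^*B_{ii}W_i=\mathrm{diag}(\lambda_{i,1},\dots,\lambda_{i,k_i})$ ordered decreasingly. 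The block-diagonal unitary $W=\mathrm{diag}(W_1,\dots,W_r)$ commutes with $D_{st}$, so replacing $V_0$ by $V := V_0 W$ preserves $V^*A_{st}V = D_{st}$ while rendering each diagonal block of the recomputed $B = V^*A_\I V$ equal to the prescribed real diagonal matrix.

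Second, I set $U := V(I+H\epsilon)$ and choose $H \in \mathbb{Q}^{n\times n}$. Unitarity of $U$ is equivalent to $H$ being skew-Hermitian, and a direct expansion gives
\[
U^*AU \;=\; D_{st} + \bigl(B + D_{st}H - HD_{st}\bigr)\epsilon.
\]
The $(i,j)$-block of the bracket equals $B_{ij} + (\lambda_i-\lambda_j)H_{ij}$. Defining $H_{ij} := B_{ij}/(\lambda_j-\lambda_i)$ for $i\ne j$ and $H_{ii} := 0$, the Hermiticity of $B$ together with the reality of $\lambda_j-\lambda_i$ yields $H^*=-H$, the off-diagonal blocks of the infinitesimal part vanish, and the diagonal blocks remain the already-diagonal $B_{ii}$'s. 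Thus $U^*AU = \Sigma$ with the prescribed structure.

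For the uniqueness statement, any competing decomposition $\tilde U^*A\tilde U = \tilde\Sigma$ yields a unitary $Q := U^*\tilde U = Q_0+Q_\I\epsilon$ with $Q^*\Sigma Q = \tilde\Sigma$. The standard part forces $Q_0^*D_{st}Q_0 = \tilde D_{st}$; since both are the sorted real spectrum of $A_{st}$, one gets $\tilde D_{st} = D_{st}$ and $Q_0$ block-diagonal with $k_i\times k_i$ blocks $Q_{0,i}$. The $(i,i)$-block of the infinitesimal part (whose cross terms cancel by the same commutator computation as in the second stage) then reads $Q_{0,i}^*\mathrm{diag}(\lambda_{i,j})Q_{0,i} = \mathrm{diag}(\tilde\lambda_{i,j})$, and preservation of spectrum under quaternion-unitary conjugation combined with the decreasing order of the $\lambda_{i,j}$'s forces $\lambda_{i,j} = \tilde\lambda_{i,j}$.

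I expect the main obstacle to be the second stage: the off-diagonal blocks of $B$ must be removable by a skew-Hermitian perturbation. This works only because the $\lambda_i$ are distinct (so that each scalar $\lambda_j-\lambda_i$ is invertible) and because the first-stage refinement has already diagonalized every $B_{ii}$; without the latter, the $(i,i)$-block of the infinitesimal part would carry a nontrivial quaternion Hermitian residue that no choice of $H$ could repair.
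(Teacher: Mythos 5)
Your proposal is correct and follows essentially the same route as the paper: diagonalize $A_{st}$ by a quaternion unitary, remove the off-diagonal infinitesimal blocks by a dual unitary of the form $I+H\epsilon$ with skew-Hermitian $H$ whose blocks are $B_{ij}/(\lambda_j-\lambda_i)$ (the paper's matrix $P$ is exactly this correction, applied before rather than after diagonalizing the diagonal blocks $B_{ii}$ via Theorem \ref{t2.1}). The only substantive difference is that you spell out the uniqueness argument, which the paper dispatches by citing Theorem \ref{t2.1}; your version of that step is sound.
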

\begin{proof}   Let $A \in {\mathbb {DQ}}^{n \times n}$ be an Hermitian matrix.  Denote $A = A_{st} + A_\I \epsilon$, where $A_{st}, A_\I \in {\mathbb {Q}}^{n \times n}$.  Then $A_{st}$ and $A_\I$ are Hermitian.   This implies that there is a quaternion unitary matrix $S \in {\mathbb {Q}}^{n \times n}$ and a real diagonal matrix $D \in {\mathbb {R}}^{n \times n}$ such that $D = SA_{st}S^*$.
Suppose that $D = {\rm diag}(\lambda_1I_{k_1}, \lambda_2I_{k_2}, \cdots, \lambda_rI_{k_r})$, where $\lambda_1 > \lambda_2 > \cdots > \lambda_r$, and $I_{k_i}$ is a $k_i \times k_i$ identity matrix, and $\sum_{i=1}^r k_i = n$.  Let $M = SAS^*$.  Then
\begin{eqnarray*}
&& M \\ & = & D + SA_\I \epsilon S^*\\
& = & \begin{bmatrix}
\lambda_1I_{k_1} + \epsilon C_{11} & \epsilon C_{12} & \cdots & \epsilon C_{1r} \\
\epsilon C_{12}^* & \lambda_2I_{k_2} + \epsilon C_{22} & \cdots  & \epsilon C_{2r} \\
\vdots & \vdots & \ddots & \vdots \\
\epsilon C_{1r}^* & \epsilon C_{2r}^* &  \cdots & \lambda_r I_{k_r} + \epsilon C_{rr}
\end{bmatrix},
\end{eqnarray*}
where each $C_{ij}$ is a quaternion matrix of adequate dimensions, each $C_{ii}$ is Hermitian.

Let
$$P = \begin{bmatrix}
I_{k_1} & {\epsilon C_{12} \over \lambda_1-\lambda_2} & \cdots & {\epsilon C_{1r} \over \lambda_1-\lambda_r}\\
-{\epsilon C_{12}^* \over \lambda_1-\lambda_2} & I_{k_2} & \cdots  & {\epsilon C_{2r} \over \lambda_2-\lambda_r} \\
\vdots & \vdots & \ddots & \vdots \\
-{\epsilon C_{1r}^* \over \lambda_1-\lambda_r} & -{\epsilon C_{2r}^* \over \lambda_2-\lambda_r}  &  \cdots & I_{k_r}
\end{bmatrix}.$$
Then
$$P^* = \begin{bmatrix}
I_{k_1} & -{\epsilon C_{12} \over \lambda_1-\lambda_2} & \cdots & -{\epsilon C_{1r} \over \lambda_1-\lambda_r}\\
{\epsilon C_{12}^* \over \lambda_1-\lambda_2} & I_{k_2} & \cdots  & -{\epsilon C_{2r} \over \lambda_2-\lambda_r} \\
\vdots & \vdots & \ddots & \vdots \\
{\epsilon C_{1r}^* \over \lambda_1-\lambda_r} & {\epsilon C_{2r}^* \over \lambda_2-\lambda_r}  &  \cdots & I_{k_r}
\end{bmatrix}.$$
Direct calculations certify $PP^* = P^*P = I_n$ and
$$\Sigma'\equiv PMP^* =(PS)A(PS)^* = {\rm diag}(\lambda_1I_{k_1}+\epsilon C_{11}, \lambda_2I_{k_2} +\epsilon C_{22}, \cdots, \lambda_rI_{k_r}+\epsilon C_{rr}).$$
Since $P$ and $S$ are unitary matrices, then so is $PS$.  %Also, $\Sigma$ is a block diagonal matrix, $\Sigma= {\rm diag}(\lambda_1I_{k_1}+\epsilon C_{11}, \lambda_2I_{k_2} +\epsilon C_{22}, \cdots, \lambda_rI_{k_r}+\epsilon C_{rr}$.  % This is the desired form which we wish to have.
Noting that each $C_{jj}$ is an Hermitian quaternion matrix, by applying Theorem \ref{t2.1}, we can find unitary matrices $U_1\in {\mathbb{Q}}^{k_1\times k_1}$, $\cdots$, $U_r\in {\mathbb{Q}}^{k_r\times k_r}$ that diagonalize $C_{11}$, $\cdots$, $C_{rr}$, respectively. That is, there exist real numbers $\lambda_{1,1} \ge \cdots \ge \lambda_{1,k_1}$, $\lambda_{2,1} \ge \cdots \ge \lambda_{2, k_2}$, $\cdots$, $\lambda_{r,1} \ge \cdots \ge \lambda_{r,k_r}$ such that
\begin{equation}\label{blocks}
U_j^* C_{jj} U_j = {\rm diag}\left(\lambda_{j,1}, \cdots, \lambda_{j,k_j}\right), ~~j=1,\cdots, r.
\end{equation}
Denote $V \equiv {\rm diag}\left(U_1, \cdots, U_r\right)$. We can easily verify that $V$ is unitary. Thus, $U\equiv (PS)^*V$ is also unitary.  Denote
$$\Sigma\equiv {\rm diag}\left(\lambda_1+\lambda_{1,1}\epsilon,\cdots, \lambda_1+\lambda_{1,k_1}\epsilon, \lambda_2+\lambda_{2,1}\epsilon,\cdots, \lambda_r+\lambda_{r,k_r}\epsilon\right).$$
Then we have $U^*AU = \Sigma$, as required.  Letting $U = (\vu_{1,1}, \cdots, \vu_{1, k_1}, \cdots, \vu_{r, k_r})$, we have (\ref{eee2}). Thus, $\lambda_i+\lambda_{i,j}\epsilon$ are eigenvalues of $A$ with $\vu_{i,j}$ as the corresponding eigenvectors, for $j = 1, \cdots, k_i$ and $i=1, \cdots, r$.   The uniqueness follows from Theorem \ref{t2.1}.

%The last two conclusions follow from Theorem \ref{t2.1}.
\end{proof}

With Theorems \ref{t3.3} and \ref{t4.1},  we have the following theorem.

\begin{Thm} \label{t4.4}
Suppose that $A \in {\mathbb {DQ}}^{n \times n}$ is Hermitian.  Then $A$ has exactly $n$ eigenvalues, which are all dual numbers.  There are also $n$ eigenvectors, associated with these $n$ eigenvalues.   The Hermitian matrix $A$ is positive semidefinite or definite if and only if all of these eigenvalues are nonnegative, or positive and appreciable, respectively.
\end{Thm}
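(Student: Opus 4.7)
The existence of exactly $n$ dual number eigenvalues with $n$ associated eigenvectors falls out almost immediately from Theorem \ref{t4.1}: that theorem produces a unitary $U = (\vu_{1,1},\dots,\vu_{r,k_r})$ such that $U^*AU = \Sigma$ is diagonal with dual number entries $\lambda_i + \lambda_{i,j}\epsilon$, and equation (\ref{eee2}) exhibits each column $\vu_{i,j}$ as an eigenvector with eigenvalue $\lambda_i + \lambda_{i,j}\epsilon$. Combined with Theorem \ref{t3.2}, which forbids any right eigenvalue that is not a dual number and caps the number of dual eigenvalues at $n$, this gives exactly $n$ eigenvalues with corresponding eigenvectors.

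For the positive (semi)definiteness characterization, my plan is to conjugate by $U$ and exploit the fact that dual numbers commute with dual quaternions. Given any $\vx \in \mathbb{DQ}^n$, set $\vy = U^*\vx = (y_1,\dots,y_n)^\top$. Since $\Sigma$ is diagonal with dual number diagonal entries $\sigma_\ell$ (where the $\sigma_\ell$ enumerate the eigenvalues $\lambda_i + \lambda_{i,j}\epsilon$), and each $\sigma_\ell$ commutes with $y_\ell$, I get
\begin{equation}
\vx^*A\vx \;=\; \vy^*\Sigma\vy \;=\; \sum_{\ell=1}^n \sigma_\ell\, (y_\ell^* y_\ell).
\end{equation}
From the preliminaries, each $y_\ell^* y_\ell$ is a nonnegative dual number. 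The forward direction (PSD or PD implies the eigenvalues are nonnegative, respectively positive and appreciable, dual numbers) is already contained in Theorem \ref{t3.2}, so I only need the reverse direction from this identity.

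For the semidefinite reverse direction, each $\sigma_\ell \geq 0$ and each $y_\ell^* y_\ell \geq 0$ gives $\vx^*A\vx \geq 0$ by the total order on dual numbers, so $A$ is positive semidefinite. For the definite reverse direction, assume every $\sigma_\ell$ is positive and appreciable and let $\vx$ be appreciable. The main technical point, and the one I expect to be the main obstacle, is to check that $\vy = U^*\vx$ is then also appreciable so that one of the $y_\ell^* y_\ell$ has positive standard part. Writing $U = U_{st} + U_\I\epsilon$, the unitarity $U^*U = I$ forces $U_{st}^*U_{st} = I$, hence $U_{st}$ is a quaternion unitary matrix and in particular invertible; therefore $\vy_{st} = U_{st}^*\vx_{st} \neq \mathbf{0}$ whenever $\vx_{st} \neq \mathbf{0}$. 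Consequently at least one $y_\ell$ is appreciable, making $y_\ell^* y_\ell$ appreciable positive, and multiplication by the appreciable positive $\sigma_\ell$ keeps the sum appreciable positive. This yields $\vx^*A\vx > 0$ and appreciable, completing the equivalence.
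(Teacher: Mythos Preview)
Your proof is correct and follows essentially the same route as the paper, which merely remarks that the theorem follows from Theorems~\ref{t3.3} and~\ref{t4.1} without further detail. Your argument is in fact more complete: you spell out the quadratic-form identity $\vx^*A\vx=\sum_\ell \sigma_\ell\, y_\ell^* y_\ell$ via the unitary decomposition of Theorem~\ref{t4.1}, and you correctly invoke Theorem~\ref{t3.2} (rather than~\ref{t3.3}) for the ``at most $n$'' bound and the forward positive (semi)definite implications.
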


\section{Perfect Hermitian Matrices}

Suppose that $M \in {\mathbb Q}^{n \times n}$ is a positive semidefinite quaternion Hermitian matrix.   Then there is a positive semidefinite quaternion Hermitian matrix $L \in {\mathbb Q}^{n \times n}$ such that $M = L^2$.   This is easy to verify, as we may write $M = UDU^*$, where $U \in {\mathbb Q}^{n \times n}$ is a unitary quaternion matrix, and $D \in {\mathbb R}^{n \times n}$ is a diagonal matrix with real nonnegative diagonal entries.   Then letting $L = UD^{1 \over 2}U^*$, we have the desired result.

However, this does not work in general for a positive semidefinite dual quaternion Hermitian matrix $M \in {\mathbb {DQ}}^{n \times n}$.   Let $M = I_n\epsilon$, where $I_n$ is the $n \times n$ identity matrix.
One cannot find $L \in {\mathbb {DQ}}^{n \times n}$ such that $M = L^2$.

We call a positive semidefinite dual quaternion Hermitian matrix $M \in {\mathbb {DQ}}^{n \times n}$ a perfect Hermitian matrix if there is a positive semidefinite dual quaternion Hermitian matrix $L \in {\mathbb {DQ}}^{n \times n}$ such that $M = L^2$.   In this section, we show that $M = B^*B$ is a perfect Hermitian matrix for any $B \in {\mathbb {DQ}}^{m \times n}$.   This property plays a key role in establishing the singular value decomposition of $B$, in the next section.

\begin{Thm}\label{special-Herm} Suppose that $B \in {\mathbb {DQ}}^{m \times n}$ and $A = B^*B$. Then there exists a unitary matrix $U\in {\mathbb {DQ}}^{n \times n}$ such that
\begin{equation}\label{block-diag}
U^* A U = {\rm diag}(\lambda_1+\lambda_{1,1}\epsilon, \cdots, \lambda_1+\lambda_{1,k_1}\epsilon, \lambda_2+\lambda_{2,1}\epsilon, \cdots, \lambda_s+\lambda_{s,k_s}\epsilon,O),
\end{equation}
where $\lambda_1>\cdots >\lambda_s>0$, $\lambda_{1,1} \ge \cdots \ge \lambda_{1,k_1}$, $\cdots$, $\lambda_{s,1}\ge \cdots \ge \lambda_{s,k_s}$ are real numbers.    Counting possible multiplicities $\lambda_{i, j}$, the real numbers $\lambda_i$ and $\lambda_{i, j}$ for $i=1, \cdots, s$ and $j = 1, \cdots, k_i$ are uniquely determined.

\end{Thm}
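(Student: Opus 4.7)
The plan is to apply Theorem \ref{t4.1} to the positive semidefinite Hermitian matrix $A = B^*B$ and then establish the additional property that, for a Gram-type dual quaternion Hermitian matrix, the block of the unitary decomposition associated with the zero eigenvalues of $A_{st}$ is exactly $O$, rather than carrying nontrivial infinitesimal entries. Writing $B = B_{st} + B_\I \epsilon$, one has $A_{st} = B_{st}^* B_{st}$ and $A_\I = B_{st}^* B_\I + B_\I^* B_{st}$. Since $A_{st}$ is a positive semidefinite quaternion Hermitian matrix, I would arrange its distinct right eigenvalues as $\lambda_1 > \cdots > \lambda_s > 0$, possibly followed by a zero eigenvalue of multiplicity $k_{s+1} := \dim \ker A_{st}$.

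Following the construction in the proof of Theorem \ref{t4.1}, I would first pick a quaternion unitary $S$ with $SA_{st}S^*$ block diagonal, and then build the dual quaternion unitary $P$ from the off-diagonal blocks of $SA_\I S^*$ (note that the relevant denominators $\lambda_j - \lambda_{s+1} = \lambda_j$ remain positive) to obtain
\begin{equation*}
(PS)\, A\, (PS)^* \; = \; {\rm diag}\bigl(\lambda_1 I_{k_1} + \epsilon C_{11},\, \ldots,\, \lambda_s I_{k_s} + \epsilon C_{ss},\, \epsilon C_{s+1,s+1}\bigr),
\end{equation*}
with each diagonal $C_{ii}$ a Hermitian quaternion matrix.

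The main step, and the only place where the hypothesis $A = B^*B$ enters, is to show $C_{s+1,s+1} = O$. I would argue as follows: if $\vu$ is any column of $S^*$ lying in the zero-eigenvalue block, then $\vu^* B_{st}^* B_{st} \vu = \vu^* A_{st} \vu = 0$, which forces $B_{st}\vu = 0$ by positivity of the quaternion norm. Consequently, for any two such columns $\vu, \vu'$,
\begin{equation*}
\vu^* A_\I \vu' \; = \; (B_{st}\vu)^* (B_\I \vu') + (B_\I \vu)^* (B_{st}\vu') \; = \; 0,
\end{equation*}
whence $C_{s+1,s+1} = O$. This is exactly the obstruction that keeps Theorem \ref{t4.1} alone from delivering the desired block form, as the counterexample $M = I_n\epsilon$ illustrates; the Gram structure is essential here and is what will justify calling $A = B^*B$ a perfect Hermitian matrix.

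Finally, I would diagonalize each remaining Hermitian block $C_{ii}$ for $i \le s$ by a quaternion unitary $U_i$, exactly as in the proof of Theorem \ref{t4.1}, producing the real numbers $\lambda_{i,1} \ge \cdots \ge \lambda_{i,k_i}$; the bottom block requires no further action since it is already zero. Setting $V = {\rm diag}(U_1,\ldots,U_s,I_{k_{s+1}})$ and $U = (PS)^* V$ then delivers the decomposition \eqref{block-diag}. Uniqueness of the positive $\lambda_i$ and of the $\lambda_{i,j}$ (counting multiplicities) follows directly from the uniqueness clause of Theorem \ref{t4.1}, since \eqref{block-diag} is a particular instance of \eqref{eee1} in which the final block of infinitesimal diagonal entries is identically zero.
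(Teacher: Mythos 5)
Your proposal is correct and follows essentially the same route as the paper: invoke the unitary decomposition machinery of Theorem \ref{t4.1} for $A=B^*B$, and then use the Gram structure (any $\vu$ with $\vu^*A_{st}\vu=0$ satisfies $B_{st}\vu=\0$, so $\vu^*A_\I\vu'=(B_{st}\vu)^*B_\I\vu'+(B_\I\vu)^*(B_{st}\vu')=0$) to force the block associated with the zero eigenvalue of $A_{st}$ to vanish. The only difference is presentational — you kill the entire block $C_{s+1,s+1}$ at the quaternion level inside the construction, whereas the paper applies Theorem \ref{t4.1} as a black box and then shows each infinitesimal part $\lambda_{r,j}$ is zero — which is, if anything, a slightly cleaner way to organize the same key observation.
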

\begin{proof} Since $A$ is a positive semidefinite dual quaternion Hermitian matrix, by virtue of Theorem \ref{t4.1}, together with Theorem \ref{t4.4}, we know that $A$ can be diagonalized by $U$ as defined in Theorem \ref{t4.1}, and $A$ has exactly $n$ eigenvalues which are all nonnegative dual numbers, say $\lambda_{i}+\lambda_{i,j}\epsilon$, $i=1,\cdots, r$, $j=1,\cdots, k_i$, and $\lambda_1>\cdots >\lambda_r\geq 0$, $\lambda_{1,1} \ge \cdots \ge \lambda_{1,k_1}$, $\cdots$, $\lambda_{r,1}\ge \cdots \ge \lambda_{r,k_r}$. It then suffices to show that if $\lambda_r =0$, then $\lambda_{r,j}=0$ for every $j=1,\cdots, k_r$.   Note that
\begin{equation}\label{5-0} \vu_{r, j}^* A \vu_{r, j} = \vu_{r, j}^* \lambda_{r,j} \vu_{r, j} \epsilon = \lambda_{r,j} \epsilon, ~~j=1,\cdots, k_r.
\end{equation}
Combining with
\begin{equation}\label{5-1}
\vu_{r, j}^* A \vu_{r, j}
=  \vu_{r, j}^* B_{st}^*B_{st} \vu_{r, j} + \vu_{r, j}^* \left(B_{st}^*B_{\I}+B_{\I}^*B_{st}\right) \vu_{r, j}\epsilon,
\end{equation}
we have
\begin{equation}\label{5-2}
\vu_{r, j}^* B_{st}^*B_{st} \vu_{r, j} = 0,
\end{equation}
and hence $B_{st} \vu_{r, j} = {\bf 0}$. This further leads to
\begin{eqnarray}
&   & \vu_{r, j}^* \left(B_{st}^*B_{\I}+B_{\I}^*B_{st}\right) \vu_{r, j} \nonumber\\
& = &\left(B_{st} \vu_{r, j}\right)^* B_\I \vu_{r, j}+ \vu_{r, j}^*B_{\I}^*\left(B_{st}\vu_{r, j}\right)\nonumber\\
& = & 0. \nonumber
\end{eqnarray}
By invoking equations \eqref{5-0} and \eqref{5-1}, we immediately get  $\lambda_{r,j}=0$ for every $j=1,\cdots, k_r$. This completes the proof.

%it suffices to show that $A$ has no right eigenvalue in the form of $\mu\epsilon$, where $\mu$ is a nonzero real number.   Suppose not.   Then we have appreciable $\vx \in$

\end{proof}

\begin{Cor}\label{cor5.2}
Suppose that $B \in {\mathbb {DQ}}^{m \times n}$ and $A = B^*B$.   Then $A$ is a perfect Hermitian matrix.
\end{Cor}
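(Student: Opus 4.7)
The plan is to build $L$ directly from the unitary diagonalization provided by Theorem \ref{special-Herm}. Since $A = B^*B$ is a positive semidefinite dual quaternion Hermitian matrix, Theorem \ref{special-Herm} yields a unitary $U \in {\mathbb {DQ}}^{n \times n}$ with $U^*AU = \Sigma$ as in (\ref{block-diag}), where the appreciable eigenvalues $\lambda_i + \lambda_{i,j}\epsilon$ have strictly positive standard part $\lambda_i > 0$, and the remaining diagonal block is the zero matrix $O$.

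The key construction is to define $\Sigma^{1/2}$ entrywise: for each appreciable diagonal entry $\lambda_i + \lambda_{i,j}\epsilon$ with $\lambda_i > 0$, set its square root to be the dual number $\sqrt{\lambda_i} + \tfrac{\lambda_{i,j}}{2\sqrt{\lambda_i}}\epsilon$, which squares back to $\lambda_i + \lambda_{i,j}\epsilon$ since $\epsilon^2 = 0$; for each zero diagonal entry, set its square root to $0$. Then I would put
\begin{equation*}
L := U\,\Sigma^{1/2}\,U^*,
\end{equation*}
and check that $L^2 = U\,\Sigma^{1/2}\,U^*\,U\,\Sigma^{1/2}\,U^* = U\,\Sigma\,U^* = A$, using $U^*U = I_n$. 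The matrix $L$ is Hermitian because $\Sigma^{1/2}$ has dual number (hence self-conjugate) diagonal entries, and it is positive semidefinite because its eigenvalues are exactly the nonnegative dual number diagonal entries of $\Sigma^{1/2}$, which is the criterion given by Theorem \ref{t4.4}.

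The only real obstacle is ensuring that $\Sigma^{1/2}$ is even well-defined as a dual number matrix. A positive dual number of the form $c\epsilon$ with $c > 0$ has no dual number square root; this is exactly why $I_n\epsilon$ fails to be perfect in the discussion preceding Theorem \ref{special-Herm}. The proposal hinges on the fact that Theorem \ref{special-Herm} rules out this situation: once the standard part $\lambda_r$ of an eigenvalue of $A = B^*B$ vanishes, the infinitesimal parts $\lambda_{r,j}$ also vanish, so the ``bad'' entries $0 + c\epsilon$ with $c \neq 0$ never appear on the diagonal of $\Sigma$. Thus every diagonal entry is either zero or appreciable and positive, both of which admit nonnegative dual number square roots, and the construction of $L$ goes through without modification.
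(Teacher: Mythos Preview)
Your proposal is correct and follows essentially the same approach as the paper's own proof: construct $L = U\,\Sigma^{1/2}\,U^*$ with the entrywise dual-number square roots $\sqrt{\lambda_i} + \tfrac{\lambda_{i,j}}{2\sqrt{\lambda_i}}\epsilon$ on the appreciable block and $O$ on the zero block, exactly as the paper does. Your write-up is in fact more detailed than the paper's, since you explicitly justify why $\Sigma^{1/2}$ is well-defined (via the vanishing of the infinitesimal parts when $\lambda_r = 0$ in Theorem~\ref{special-Herm}) and why $L$ is Hermitian and positive semidefinite.
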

\begin{proof} In Theorem \ref{special-Herm}, let
$$L = U\left(
            \begin{array}{cccccc}
              \sqrt{\lambda_{1}} + { \lambda_{1,1} \over 2\sqrt{\lambda_{1}}} \epsilon &   &   & & &\\
              &  \ddots & &  & & \\
               & &\sqrt{\lambda_{1}} + { \lambda_{1,k_1} \over 2\sqrt{\lambda_{1}}} \epsilon    &   & &\\
                &  & & \ddots &  &  \\
                &   & &  &\sqrt{\lambda_{s}}+{ \lambda_{s,k_s}\over 2\sqrt{\lambda_{s}}} \epsilon &  \\
                &    &  &  &  & O
            \end{array}
          \right)U^*.$$
Then $L$ is Hermitian and positive semidefinite, and $L^2 = A$.
\end{proof}

\section{Singular Value Decomposition of a Dual Quaternion Matrix}

We now present the singular value decomposition of dual quaternion matrices.

\begin{Thm}\label{SVD}
Suppose that $B \in {\mathbb {DQ}}^{m \times n}$. Then there exists a dual quaternion unitary matrix $\hat{V} \in {\mathbb {DQ}}^{m \times m}$ and a dual quaternion unitary matrix $\hat{U} \in {\mathbb {DQ}}^{n \times n}$ such that
\begin{equation} \label{e20}
\hat{V}^*B\hat{U} = \begin{bmatrix} \Sigma_t & O  \\ O & O \end{bmatrix},
\end{equation}
where $\Sigma_t\in {\mathbb{D}}^{t\times t}$ is a diagonal matrix, taking the form $$\Sigma_t ={\rm diag}\left(\mu_1, \cdots, \mu_r, \cdots, \mu_t \right),$$
$r \le t \le \min \{ m , n \}$, $\mu_1 \ge \mu_2 \ge \cdots \ge \mu_r$ are positive appreciable dual numbers, and $\mu_{r+1} \ge \cdots \ge \mu_t$ are positive infinitesimal dual numbers.    Counting possible multiplicities of the diagonal entries, the form $\Sigma_t$ is unique.

%$\in {\mathbb {D}}^{r \times r}$ is a diagonal matrix of, and each block of $\Sigma_r$ is of the form $\sigma_iI_{k_i} + J_i\epsilon$,
%where $\sigma_i$ is positive real number, $I_{k_i}$ is the $k_i \times k_i$ identity matrix, $J_i \in {\mathbb Q}^{k_i \times k_i}$ is an Hermitian matrix, and $G$ is a quaternion matrix.
\end{Thm}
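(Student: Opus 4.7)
The plan is to reduce the SVD of $B$ to two ingredients already in hand: the spectral decomposition of the Hermitian matrix $A:=B^{*}B$, controlled by Theorem \ref{special-Herm} and Theorem \ref{t4.4}, together with a single classical quaternion SVD applied to an infinitesimal residual block. The appreciable singular values of $B$ will be the positive dual square roots of the appreciable eigenvalues of $A$, while the infinitesimal singular values will come out of the auxiliary quaternion SVD.

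Concretely, the key steps are as follows. First, apply Theorem \ref{special-Herm} to $A=B^{*}B$ to obtain a unitary $U\in\mathbb{DQ}^{n\times n}$ with $U^{*}AU=\mathrm{diag}(\Lambda,O)$, where $\Lambda\in\mathbb{D}^{r\times r}$ is diagonal with positive appreciable entries $\lambda_{i}+\lambda_{i,j}\epsilon$ in the order inherited from \eqref{block-diag}; partition $U=[U_{1}\,|\,U_{2}]$ with $U_{1}\in\mathbb{DQ}^{n\times r}$. Second, since $\Lambda^{1/2}$ is a well-defined invertible dual diagonal matrix, set $V_{1}:=BU_{1}\Lambda^{-1/2}$; a short computation gives $V_{1}^{*}V_{1}=\Lambda^{-1/2}U_{1}^{*}AU_{1}\Lambda^{-1/2}=I_{r}$, so $V_{1}$ is partially unitary. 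Third, extend $V_{1}$ to a unitary $\tilde V=[V_{1}\,|\,V_{1}']\in\mathbb{DQ}^{m\times m}$. Using $U_{1}^{*}AU_{2}=O$ and $V_{1}'^{\,*}V_{1}=O$, block multiplication gives
\[
\tilde V^{*}BU \;=\; \begin{bmatrix}\Lambda^{1/2} & 0 \\ 0 & C\end{bmatrix}, \qquad C:=V_{1}'^{\,*}BU_{2}.
\]
Fourth, using $V_{1}V_{1}^{*}+V_{1}'V_{1}'^{\,*}=I_{m}$ and $V_{1}^{*}BU_{2}=\Lambda^{-1/2}U_{1}^{*}AU_{2}=0$, show $C^{*}C=U_{2}^{*}AU_{2}=O$; hence $C_{st}^{*}C_{st}=O$, forcing $C_{st}=0$ and $C=C_{\I}\epsilon$ for some $C_{\I}\in\mathbb{Q}^{(m-r)\times(n-r)}$. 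Fifth, apply the classical quaternion SVD to $C_{\I}$ to obtain quaternion unitaries $V''$, $U''$ with $V''^{*}C_{\I}U''=\mathrm{diag}(\nu_{1},\dots,\nu_{r''},0,\dots,0)$, $\nu_{1}\ge\cdots\ge\nu_{r''}>0$. Sixth, set $\hat V=\tilde V\,\mathrm{diag}(I_{r},V'')$ and $\hat U=U\,\mathrm{diag}(I_{r},U'')$; both are unitary, and
\[
\hat V^{*}B\hat U \;=\; \begin{bmatrix}\Lambda^{1/2} & 0 & 0 \\ 0 & \Sigma''\epsilon & 0 \\ 0 & 0 & 0\end{bmatrix},
\]
which matches the desired form with $t=r+r''$; the appreciable $\mu_{i}$'s precede the infinitesimal $\nu_{j}\epsilon$'s by the total order on $\mathbb{D}$, and the decreasing orders within each block are inherited from Theorem \ref{special-Herm} and the quaternion SVD, respectively. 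Uniqueness of the appreciable $\mu_{i}$'s follows from Theorem \ref{t4.4} applied to $A$, and uniqueness of the $\nu_{j}$'s reduces to uniqueness of the classical singular values of $C_{\I}$.

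The crux is the fourth step: the identity $C^{*}C=O$ is what cleanly separates the SVD into an appreciable part and an infinitesimal part, allowing the residual block to be handled by a single auxiliary quaternion SVD. This relies crucially on the block orthogonality in Theorem \ref{special-Herm}, which in turn is a manifestation of the ``perfect Hermitian'' property of $B^{*}B$. A secondary technical nuisance is the orthonormal extension in the third step; this can be carried out by first extending $V_{1,st}$ to a quaternion unitary matrix and then choosing an appropriate infinitesimal correction to meet the dual-quaternion orthogonality condition. Uniqueness of the infinitesimal singular values is more delicate than the appreciable case because they are not eigenvalues of $B^{*}B$, and so it requires comparing any two SVDs of $B$ directly rather than simply reading values off Theorem \ref{t4.4}.
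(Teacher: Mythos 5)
Your proposal is correct and follows essentially the same route as the paper: diagonalize $A=B^*B$ via Theorem \ref{special-Herm}, set $V_1=BU_1\Sigma_r^{-1}$ (your $\Lambda^{-1/2}$ is exactly the paper's $\Sigma_r^{-1}$), extend to a unitary, observe the residual block is infinitesimal, and finish with a classical quaternion SVD of that block. The only cosmetic difference is that you deduce infinitesimality from $C^*C=O$ while the paper reads $BU_2=M\epsilon$ directly off $U_2^*AU_2=O$; your added remarks on the unitary extension and on uniqueness of the infinitesimal singular values are, if anything, more careful than the paper's.
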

\begin{proof}  Let $A = B^* B$. It follows from Theorem \ref{special-Herm} %\red{Corollary \ref{cor5.2} that}
that there exists a unitary matrix $U \in {\mathbb {DQ}}^{n \times n}$ as defined in Theorem \ref{t4.1} such that $A$ can be diagonalized as in \eqref{block-diag}. Set $r = \sum\limits_{j=1}^s k_j$,
\begin{equation}\label{sigma-cons}
\Sigma_r
\equiv \left(
            \begin{array}{ccccc}
              \sigma_1+\sigma_{1,1}\epsilon &   &   & &  \\
              &  \ddots & &  &   \\
               & &  \sigma_1+\sigma_{1,k_1}\epsilon  &   &  \\
                &  & & \ddots    &  \\
                &   &  & &\sigma_s+\sigma_{s,k_s}\epsilon
            \end{array}
          \right)
\end{equation}
with $\sigma_i = \sqrt{\lambda_{i}}$, $\sigma_{i,j} = { \lambda_{i,j} \over 2\sqrt{\lambda_{i}}}$, $j=1,\cdots, k_i$, $i=1,\cdots, s$. Denote
$U_1 = U_{:,1:r}$ and $U_2 = U_{:, r+1:n}$. By direct calculations, we have
$$AU = [B^* B U_1~~B^*BU_2 ]= [U_1\Sigma_r^2~~O],$$
yielding
$$ U_1^*B^* B U_1 = \Sigma_r^2, ~~U_2^*B^* B U_2 = O.$$
Therefore, $B U_2 = M \epsilon$ with some quaternion matrix $M$.

%\red {The dual quaternion matrix $A^*A$ is positive semi-definite.   By Theorem \ref{t4.3}, there are unitary matrix $V \in {\mathbb {DQ}}^{n \times n}$ and block-diagonal matrix $\Sigma \in {\mathbb {DQ}}^{n \times n}$ such that $\Lambda = (V')^*A^*AV'$, and each block of $\Lambda$ is of the form $\lambda_iI_{k_i} + J_i\epsilon$,
%where $\lambda_i = \sigma_i^2$ is a real nonnegative number, $I_{k_i}$ is the $k_i \times k_i$ identity matrix, $J_i \in {\mathbb Q}^{k_i \times k_i}$ is an Hermitian matrix.  We may further arrange $\sigma_1 \ge \cdots \ge \sigma_r > 0$, $\sigma_{r+1} = \cdots = \sigma_n = 0$.  Let $\sum_{i=1}^r k_i =s$ and $\Lambda_s$ be the left upper $s \times s$ part of $\Lambda$.   Here, $\sigma_i$ are all real nonnegative numbers.      Write
%$V'_1 = (\vv^{(1)}, \cdots, \vv^{(s)})$,  $V'_2 = (\vv^{(s+1)}, \cdots, \vv^{(n)})$, $V' = (V'_1, V'_2)$. Let $\Sigma_s$ be an $s \times s$ block-diagonal matrix such that $\Sigma_s^2 = \Lambda_s$.  Each block of $\Sigma_s$ is of the form $\sigma_iI_{k_i} + {1 \over 2 \sigma_i}J_i\epsilon$.   Then we have
%$$A^*AV'_1 = V_1\Sigma_s^2,$$
%$$(V'_1)^*A^*AV_1 = \Sigma_s^2,$$
%$$A^*AV'_2 = O,$$
%$$(V'_2)^*A^*AV'_2 = O.$$
%Therefore, $AV'_2 = B\epsilon$ for a quaternion matrix $B$.
Let $V_1 = BU_1\Sigma_r^{-1}\in {\mathbb{DQ}}^{m\times r}$.
Then
$$V_1^*V_1 = I_s,~~V_1^*BU_1 = V_1^*V_1\Sigma_r = \Sigma_r,~~ V_1^*BU_2 = \left(\Sigma_r^{-1}\right)^*U_1^*(B^*BU_2) =  O.$$
Take $V_2 \in {\mathbb {DQ}}^{m \times (m-r)}$ such that $V = (V_1, V_2)$ is a unitary matrix.  We see that
$$V_2^*BU_1= V_2^*V_1\Sigma_r = O,~~V_2^*BU_2 = V_2^*M\epsilon = G\epsilon,$$
where $G$ is an $(m-r) \times (n-r)$ quaternion matrix.   Thus,
\begin{eqnarray*}
V^*BU & = & \begin{bmatrix} V_1^*BU_1 & V_1^*BU_2 \\ V_2^*BU_1 & V_2^*BU_2 \end{bmatrix}\\
& = & \begin{bmatrix} \Sigma_r & O \\ O & G\epsilon \end{bmatrix}.
\end{eqnarray*}
Applying Theorem 7.2 in \cite{Zh97}, there exist unitary matrices $W_1\in {\mathbb{Q}}^{(m-r)\times (m-r)}$ and $W_2\in{\mathbb{Q}}^{(n-r)\times (n-r)}$ such that
$W_1^* G W_2 = D$, where $D\in {\mathbb{Q}}^{(m-r)\times (n-r)}$ with $D_{ij}=0$ for any $i\neq j$ and $D_{ii}\geq 0$ for each $i=1,\cdots, \min\{m-r,n-r\}$. Denote
\begin{equation}\label{UV_bar}
\hat{V} \equiv V\left(
                  \begin{array}{cc}
                    I_r &   \\
                      & W_1 \\
                  \end{array}
                \right), ~~\mathrm{and}~~\hat{U} \equiv U\left(
                  \begin{array}{cc}
                    I_r &   \\
                      & W_2 \\
                  \end{array}
                \right).
\end{equation}
It is obvious that both $\hat{V}$ and $\hat{U}$ are unitary and $\hat{V}^* B \hat{U} = \begin{bmatrix} \Sigma_r & O  \\ O & D\epsilon \end{bmatrix}$.   Then we have (\ref{e20}).  The uniqueness of $\Sigma_t$ follows from Theorems \ref{t4.1} and \ref{t5.1}.
This completes the proof.
%After taking SVD decomposition of the quaternion matrix $G$, we have the desired result.
\end{proof}

We call $\mu_1, \cdots, \mu_t$ and possibly $\mu_{t+1} = \cdots = \mu_{\min \{m, n \} }=0$, if $t < \min \{m, n\}$, the singular values of $B$, $t$ the rank of $B$, and $r$ the appreciable rank of $B$. Several properties on the rank and the appreciable rank of a dual quaternion matrix are stated as follows.

\begin{Prop}\label{Prop-rank-1}
Suppose that $B = B_{st}+B_\I\epsilon \in {\mathbb {DQ}}^{m \times n}$. Then the appreciable rank of $B$ is equal to the rank of $B_{st}$.
\end{Prop}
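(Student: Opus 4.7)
The plan is to apply the singular value decomposition from Theorem \ref{SVD} and then descend to the quaternion level by taking standard parts. Write $\hat V^* B \hat U = \begin{bmatrix} \Sigma_t & O \\ O & O \end{bmatrix}$ with $\hat V, \hat U$ dual quaternion unitary and $\Sigma_t = {\rm diag}(\mu_1, \ldots, \mu_t)$, where $\mu_1, \ldots, \mu_r$ are positive appreciable and $\mu_{r+1}, \ldots, \mu_t$ are positive infinitesimal. By definition the appreciable rank of $B$ is $r$, so the goal reduces to showing $\rank(B_{st}) = r$.

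The first substantive step is the observation that the standard part of a dual quaternion unitary matrix is a quaternion unitary matrix: expanding $\hat U^* \hat U = I_n$ with $\hat U = \hat U_{st} + \hat U_\I \epsilon$ and taking standard parts yields $\hat U_{st}^* \hat U_{st} = I_n$, and likewise for $\hat V$. Taking the standard part of the whole SVD identity then gives
$$\hat V_{st}^* B_{st}\, \hat U_{st} = \begin{bmatrix} (\Sigma_t)_{st} & O \\ O & O \end{bmatrix},$$
where $(\Sigma_t)_{st}$ is a $t \times t$ diagonal matrix whose first $r$ entries are the positive reals $(\mu_1)_{st}, \ldots, (\mu_r)_{st}$ and whose remaining $t-r$ entries vanish, because $\mu_{r+1}, \ldots, \mu_t$ are infinitesimal.

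Since $\hat V_{st}$ and $\hat U_{st}$ are invertible quaternion matrices, left and right multiplication by them preserves quaternion rank, hence $\rank(B_{st})$ equals the rank of the block matrix on the right, which is precisely the number of nonzero diagonal entries, namely $r$. This finishes the proof. There is essentially no obstacle here; the argument is direct bookkeeping with standard parts. The only point requiring mild care is confirming that the infinitesimal singular values $\mu_{r+1}, \ldots, \mu_t$ genuinely contribute $0$ to $(\Sigma_t)_{st}$, so that they count neither toward $\rank(B_{st})$ nor toward the appreciable rank of $B$.
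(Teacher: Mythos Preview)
Your proof is correct but takes a different and more direct route than the paper's. The paper argues via $A = B^*B$: from the construction in the proof of Theorem~\ref{SVD} it identifies the appreciable rank of $B$ with the number of appreciable eigenvalues of $A$, then uses Theorems~\ref{t4.1} and~\ref{special-Herm} to equate this with the number of positive real right eigenvalues of $A_{st} = B_{st}^*B_{st}$, and finally invokes the quaternion SVD to conclude this equals $\rank(B_{st})$. You instead treat the dual quaternion SVD as a black box and simply pass to standard parts, using that $(\cdot)_{st}$ is multiplicative, that the standard part of a dual quaternion unitary matrix is quaternion unitary, and that multiplication by invertible quaternion matrices preserves rank. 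Your argument is shorter and avoids unpacking the proof of Theorem~\ref{SVD}; the paper's route has the minor expository benefit of making explicit the connection between the appreciable rank and the eigenvalue count of $B^*B$.
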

\begin{proof} As one can see from the proof of Theorem \ref{SVD}, the appreciable rank of $B$ is exactly the number of positive eigenvalues of $A=B^*B$. Together with Theorem \ref{t4.1} and Theorem \ref{special-Herm}, this number is exactly the number of real positive right eigenvalues of $A_{st}=B_{st}^*B_{st}$, which is indeed the rank of $B_{st}$ from Theorem 7.2 in \cite{Zh97}.
\end{proof}
\begin{Prop}\label{Prop-rank-2}
Suppose that $P \in {\mathbb {DQ}}^{m \times n}$ is partially unitary. Then both the appreciable rank of $P$ and the rank of $P$ are exactly $n$.
\end{Prop}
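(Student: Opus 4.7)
The plan is to reduce everything to the matrix $A := P^*P$ and then invoke the spectral and SVD machinery already established. Since $P$ is partially unitary in the sense defined just before this proposition, its column vectors are orthonormal, which is precisely the statement $P^*P = I_n$. Writing $P = P_{st} + P_\I \epsilon$ and taking the standard part of the identity $P^*P = I_n$ yields $P_{st}^* P_{st} = I_n$, so $P_{st}$ has orthonormal columns as a quaternion matrix and therefore has rank $n$ by the classical theory (in particular $n \le m$).

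I would then invoke Proposition \ref{Prop-rank-1}: the appreciable rank of $P$ equals the rank of $P_{st}$, which we just showed is $n$. So the appreciable rank of $P$ is exactly $n$. To finish, I need to conclude that the total rank of $P$ is also $n$. This follows by a squeeze: the appreciable rank is always $\le$ the rank (since every positive appreciable dual number is in particular a positive dual number), and by Theorem \ref{SVD} the rank $t$ satisfies $t \le \min\{m,n\} = n$. Combining $n \le t \le n$ gives $t = n$.

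An equally short alternative route bypasses Proposition \ref{Prop-rank-1}: since $A = P^*P = I_n$, the matrix $A$ is already diagonal with all $n$ eigenvalues equal to the positive appreciable dual number $1$. By the construction in Theorems \ref{special-Herm} and \ref{SVD}, the singular values of $P$ are the square roots of these eigenvalues, so $\mu_1 = \cdots = \mu_n = 1$, every singular value is positive and appreciable, and hence both the rank and the appreciable rank of $P$ equal $n$. There is no real obstacle here; the only point that requires a moment's thought is recording that rank $\le \min\{m,n\}$, which is already built into the statement of Theorem \ref{SVD}.
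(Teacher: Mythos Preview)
Your proposal is correct and follows essentially the same route as the paper: use $P^*P = I_n$ to deduce (via Proposition~\ref{Prop-rank-1}, or equivalently by taking standard parts) that the appreciable rank equals the rank of $P_{st}^*P_{st} = I_n$, namely $n$, and then squeeze using $n = \text{appreciable rank} \le \text{rank} \le \min\{m,n\} = n$. Your alternative route of reading the singular values directly off $A = I_n$ is a valid shortcut but not the argument the paper gives.
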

\begin{proof}
By the definition of partially unitary matrices, we have $P^* P = I_n$. It then follows from the proof of Proposition \ref{Prop-rank-1} that the appreciable rank of $P$ is indeed the rank of $P_{st}^* P_{st}=I_n$, which is exactly $n$. Since the rank of $P$ is no less than its appreciable rank, and no greater than $\min\{m,n\}$. Thus, the rank of $P$ is also $n$.
\end{proof}

\section{Final Remarks}

In this paper, we have studied eigenvalues of dual quaternion Hermitian matrices and singular values of general dual quaternion matrices.   Some questions may be further considered.

1. What is the rank theory of dual quaternion matrices?

2. If the entries of a dual quaternion matrix are all unit dual quaternions, what special properties does such a matrix have?

3. Is there a determinant theory for dual quaternion matrices?

4. Besides the dual quaternion matrix recovery problem, can we list more application problems to motivate our study?

Clearly, more problems on dual quaternion matrices are worth exploring.

\bigskip

{\bf Acknowledgment}   We are thankful to  Ran Gutin and Chen Ling for their comments.

%{\bf Data availability statement}    The datasets generated during and/or analysed during the current study are available from the corresponding author on reasonable request.

% \vspace{100pt}

\end{document}